\documentclass[12pt,reqno]{amsart}
\usepackage{hyperref,cleveref,amssymb,eqnarray}

\theoremstyle{plain}
\newtheorem{theorem}{Theorem}[section]
\newtheorem{proposition}[theorem]{Proposition}
\newtheorem{corollary}[theorem]{Corollary}
\newtheorem{lemma}[theorem]{Lemma}

\theoremstyle{definition}

\newtheorem{question}[theorem]{Question}
\newtheorem{example}[theorem]{Example}


\newcommand{\term}[1]{{\textit{\textbf{#1}}}}   

\newcommand{\Pelc}[1]{\left(\displaystyle{\oplus_{n=1}^\infty} #1_n\right)_p}


\DeclareSymbolFont{bbold}{U}{bbold}{m}{n}
\DeclareSymbolFontAlphabet{\mathbbold}{bbold}

\DeclareMathOperator{\diag}{diag}

\renewcommand{\le}{\leqslant}
\renewcommand{\ge}{\geqslant}

\begin{document}

\title[Commutators of positive operators]{Positive operators as commutators of positive operators\\
\vskip.2cm
\tiny{D\lowercase{edicated to the memory of}  J\lowercase{aroslav} Z\lowercase{em{\'a}nek}}}

\author{R. Drnov\v sek}
\address{Faculty of Mathematics and Physics, University of Ljubljana,
  Jadranska 19, 1000 Ljubljana, Slovenia}
\email{roman.drnovsek@fmf.uni-lj.si}

\author{M. Kandi\'c}
\address{Faculty of Mathematics and Physics, University of Ljubljana,
  Jadranska 19, 1000 Ljubljana, Slovenia}
\email{marko.kandic@fmf.uni-lj.si}

\keywords{positive commutators, compact operators, nilpotent operators, order Pelczy\'nski decomposition}
\subjclass[2010]{47B07, 47B65, 46B42, 47B47}

\thanks{The authors acknowledge the financial support from the
  Slovenian Research Agency (research core funding No. P1-0222).}

\date{\today}

\begin{abstract}
It is known that a positive commutator $C=A B - B A$ between positive operators on a Banach lattice is quasinilpotent whenever at least one of $A$ and $B$ is compact. In this paper we study the question under which conditions a positive operator can be written as a commutator between positive operators.
As a special case of our main result we obtain that positive compact operators on order continuous Banach lattices which admit order Pelczy\'nski decomposition are commutators between positive operators.
Our main result is also applied in the setting of a separable infinite-dimensional Banach lattice $L^p(\mu)$ $(1<p<\infty)$.
\end{abstract}

\maketitle

\section{Introduction}

Given an  associative algebra $\mathcal A$, the natural question is to determine all commutators of $\mathcal A$. Shoda \cite{Shoda} proved that a matrix $C \in \mathbb M_n(F)$ is a commutator if and only if the trace of $C$ is zero. Wintner \cite{Wintner} proved that the identity in a unital Banach algebra is not a commutator. By passing to the Calkin algebra, Wintner's result immediately implies that a bounded operator on a Banach space which is  of the form $\lambda I+K$ for some nonzero scalar $\lambda$ and a compact operator $K$ is not a commutator. The complete characterization of commutators in the Banach algebra $\mathcal B(\mathcal H)$ of all bounded operators on an infinite-dimensional Hilbert space $\mathcal H$ is due to Brown and Pearcy \cite{Brown:65}. They proved that a bounded operator $C$ on $\mathcal H$ is a commutator if and only if it is not of the form $\lambda I+K$ for some nonzero scalar $\lambda$ and some operator $K$ from the unique maximal ideal in $\mathcal B(\mathcal H)$. Apostol (\cite{Apostol:72,Apostol:73}) proved that a bounded operator on either $l^p$ ($1<p<\infty$) or $c_0$ is a commutator if and only if it is not of the form $\lambda I+K$ where $\lambda\neq 0$ and $K$ is compact. In the case of the Banach space $l^1$ the same characterization was obtained by Dosev in \cite{Dosev:09}. In the case of the Banach space $l^\infty$ Dosev and Johnson \cite{DJ:10} proved that a bounded operator is a commutator if and only if it is not of the form $\lambda I+K$ where $\lambda\neq 0$ and $K$ is strictly singular.

Our interest for commutators goes into a different direction. In this paper we are concerned with those positive operators on Banach lattices which can be written as commutators between positive operators.
The study of positive commutators of positive operators was initiated in \cite{Bracic:10}.
The assumption on positivity of $A$ and $B$ may lead to some restrictions on $C = A B - B A$.
If a positive operator $C$ can be written as a commutator of positive operators with one of them compact, then $C$ is necessarily quasinilpotent (\cite{Drnovsek:12,Gao:14}). In the finite-dimensional case, positive commutators of positive operators are always nilpotent.

In \Cref{Nilpotents} we first deal with nilpotent commutators. Under some mild assumptions which are satisfied for $L^p$-spaces $(1\leq p<\infty)$ we prove that a positive nilpotent (and compact) operator $C$ is always a commutator between a positive central operator and a positive nilpotent (and compact) operator. This result immediately implies that a positive matrix is nilpotent if and only if it can be written as a commutator between positive matrices. This characterization can be understood as
a ``positive" version of Shoda's characterization of matrices which are commutators.

In \Cref{Quasinilpotents} we consider quasinilpotent operators on $l^p$-spaces. If the entries of the matrix corresponding to a positive operator $C$ go sufficiently fast to zero, then $C$ can be written as a commutator between a positive diagonal operator and a positive compact quasinilpotent operator if and only if $C$ is quasinilpotent.

\Cref{compacts} is devoted to positive operators on Banach lattices which admit order Pelczy\'nski decomposition.
Our main result (\Cref{positive A B}) is inspired by the theorem of \cite[Theorem 3.3]{Schneeberger:71}
asserting that every compact operator on an infinite-dimensional separable Banach space $L^p(\mu)$ $ (1 < p < \infty)$
is a commutator between bounded operators. As a special case of our result we conclude that every positive compact operator
on an infinite-dimensional separable Banach lattice $L^p(\mu)$ $(1\leq p < \infty)$ is a commutator between positive operators.

\vspace{5mm}
\section{Preliminaries}

Pick $n \in\mathbb N$ and $1\leq k\leq n$. An upper-triangular matrix $A\in \mathbb M_n(\mathbb C)$ is said to be $k$-\term{super upper-triangular} whenever the diagonal of $A$ together with  first $(k-1)$-super diagonals of $A$ are zero. With respect to the above definition strictly upper-triangular matrices are precisely $1$-super upper-triangular and the zero matrix is $n$-super upper-triangular. An operator $T$ is said to be \term{nilpotent} if $T^m = 0$ for some
positive integer $m$. The smallest $m$ with this property is called the \term{nilpotency index} of $T.$ The \term{Jordan block} $J_n$ is a $n\times n$-matrix with zero entries everywhere except on the first super diagonal where all entries are equal to one.

Suppose now that $X$ is a vector lattice and $T:X\to X$ is a positive operator.
The \term{absolute kernel} of the operator $T$ is defined by $\mathcal N(T) = \{x\in X:\; T|x|=0\}$.
It should be noted that $\mathcal N(T)$ is always an ideal of $X$. If $T$ is also order continuous, then $\mathcal N(T)$ is a band in $X$. A band $B$ in $X$ is a \term{projection band} if $X=B\oplus B^d.$ The band projection $P_B$ of $X$ to $B$ is an order continuous positive operator. A positive operator $T$ on $X$ is said to be \term{central} whenever there exists a positive real number $\lambda$ such that $0\leq T\leq \lambda I$. In the case when $X=\mathbb R^n$, a positive matrix $T$ is central if and only if it is diagonal.
Positive operators $A$ and $B$ on a vector lattice $X$ are said to \term{semi-commute} if either $AB\geq BA$ or $BA\geq AB.$

%
%

Suppose $X$ is a Banach lattice. The closed unit ball of $X$ is denoted by $B_X$. Banach lattice $X$ is said to be \term{order continuous} whenever $x_\alpha\downarrow 0$ implies $\|x_\alpha\| \downarrow 0.$ A set $A$ of $X$ is said to be \term{almost order bounded} if for every $\epsilon>0$ there exists $u\geq 0$ in $X$ such that $A\subseteq [-u,u]+\epsilon B_X.$  By \cite[Theorem 122.1]{Zaanen:83}, a set $A$ is almost order bounded if and only if for each $\epsilon>0$ there exists $u\in X^+$ such that $\|(|x|-u)^+\|<\epsilon$ for each $x\in A.$ If $X$ has order continuous norm, then order intervals are weakly compact, so that by Grothendieck's theorem (see e.g. \cite[Theorem 3.44]{Aliprantis:06}) almost order bounded sets are relatively weakly compact.
On the other hand, order continuity of the norm on $X$ (resp., $X^*$) can be characterized through an approximation property which can be understood as a ``local almost order boundedness" of the closed unit ball of $X^*$ (resp., $X$). The following approximation theorem is due to Dodds and Fremlin (see e.g. \cite{Dodds:79} and \cite{Aliprantis:06}).

\begin{theorem}\label{Dodds-Fremlin}
For a Banach lattice $X$ the following assertions hold.
\begin{enumerate}
\item[(a)] $X$ has order continuous norm if and only if for each $\epsilon>0$ and each $x\in X^+$ there exists some $0\leq \psi\in X^*$ such that $(|\varphi|-\psi)^+(x)<\epsilon$ for each $\varphi \in B_{X^*}.$
\item[(b)] $X^*$ has order continuous norm if and only if for each $\epsilon>0$ and each $0\leq \psi\in X^*$ there exists some $y\in X^+$ such that $\psi((|x|-y)^+)<\epsilon$ for each $x\in B_X.$
\end{enumerate}
\end{theorem}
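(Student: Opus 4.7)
I would prove each of (a) and (b) as a separate equivalence. In each case the direction ``approximation $\Rightarrow$ order continuity'' follows by a short Riesz-decomposition estimate applied to a disjoint order-bounded sequence, invoking the standard characterization of order continuity as ``every disjoint order-bounded sequence is norm null''. The converse is the main obstacle and proceeds by contradiction via a disjointification argument, together with the fact that every order continuous Dedekind complete Banach lattice is a KB-space.

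\textbf{Easy directions.} For (b), take a disjoint sequence $(\psi_n) \subseteq X^*$ with $|\psi_n| \le \psi_0 \in X^*_+$. Pick $x_n \in B_X$ with $|\psi_n(x_n)| \ge \|\psi_n\|/2$ and apply the hypothesis to $\psi_0$ to get $y \in X^+$ with $\psi_0((|x|-y)^+) < \epsilon$ for all $x \in B_X$. Splitting $|x_n| = (|x_n| \wedge y) + (|x_n|-y)^+$ yields
\[
\|\psi_n\|/2 \le |\psi_n|(|x_n|) \le |\psi_n|(y) + \psi_0((|x_n|-y)^+) \le |\psi_n|(y) + \epsilon,
\]
and disjointness of $(|\psi_n|)$ combined with $|\psi_n| \le \psi_0$ forces $\sum_{n=1}^N |\psi_n|(y) \le \psi_0(y) < \infty$, so $|\psi_n|(y) \to 0$ and $\|\psi_n\| \to 0$. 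Part (a) is entirely parallel: for a disjoint $(z_n) \subseteq X$ with $|z_n| \le x_0$, pick $\varphi_n \in B_{X^*}$ near-extremal at $z_n$, extract the approximating $\psi$ from the hypothesis, and use $\sum_n \psi(|z_n|) \le \psi(x_0) < \infty$ to conclude.

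\textbf{Hard direction of (b) --- the main obstacle.} Assume $X^*$ is order continuous but some $\psi \in X^*_+$ and $\epsilon > 0$ defy the property, so for every $y \in X^+$ there is $x_y \in B_X$ with $\psi((|x_y|-y)^+) \ge \epsilon$. Starting from $y_1 := 0$ and iterating $y_{n+1} := z_1 + \dots + z_n$, define $z_{n+1} := (|x_{y_{n+1}}| - y_{n+1})^+$. The identity $(a-b)^+ \wedge b = 0$ combined with $z_i \le y_{n+1}$ makes $(z_n) \subseteq B_X \cap X^+$ pairwise disjoint with $\psi(z_n) \ge \epsilon$. Using Dedekind completeness of $X^*$, the null ideals $N_n := \{\chi \in X^* : |\chi|(z_n) = 0\}$ are bands, and the band projections $Q_n$ onto $N_n^d$ produce $\psi_n := Q_n \psi \in [0, \psi]$ with $\psi_n(z_n) = \psi(z_n) \ge \epsilon$ (since $(I - Q_n)\psi \in N_n$ annihilates $z_n$). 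A standard band-projection argument exploiting the disjointness of the $(z_n)$ in $X$ shows the $(\psi_n)$ are pairwise disjoint in $X^*$. Since $X^*$ is a KB-space, the increasing sum $\sum_{n=1}^N \psi_n \le \psi$ converges in norm, forcing $\|\psi_n\| \to 0$ --- contradicting $\|\psi_n\| \ge \epsilon$.

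\textbf{Hard direction of (a).} A symmetric exhaustion, iterating $\psi_n := \eta_1 + \dots + \eta_{n-1}$ and $\eta_n := (|\varphi_n| - \psi_n)^+$ for witnesses $\varphi_n \in B_{X^*}$, produces a pairwise disjoint sequence $(\eta_n) \subseteq X^*_+$ with $\|\eta_n\| \le 1$ and $\eta_n(x) \ge \epsilon$ for the fixed $x \in X^+$. The contradiction is obtained from the classical consequence of order continuity of $X$ that every norm-bounded disjoint sequence in $X^*_+$ is weak$^*$ null (in particular $\eta_n(x) \to 0$), contradicting $\eta_n(x) \ge \epsilon$. The delicate point throughout is the disjointification step --- verifying that the band components produced from a disjoint sequence on one side are indeed pairwise disjoint on the other --- and in (a), the additional care needed because $X$ itself is not Dedekind complete, which forces the supporting ``weak$^*$ null'' argument to be set up via $X \hookrightarrow X^{**}$.
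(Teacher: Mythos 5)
The paper itself offers no proof of this statement: it is quoted from the literature, with the proof deferred to Dodds--Fremlin \cite{Dodds:79} and to \cite{Aliprantis:06}. So the only issue is whether your argument stands on its own. Your two ``easy'' directions do: the splitting $w=(w-z)^{+}+w\wedge z$ together with the characterization of order continuity by order bounded disjoint sequences, and the summability $\sum_{n}|\psi_{n}|(y)\le\psi_{0}(y)$ coming from disjointness, are correct and are essentially how these implications are proved in the references.

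Both ``hard'' directions, however, have a genuine gap: your disjointification rests on the claimed identity $(a-b)^{+}\wedge b=0$, which is false. Already in $\mathbb{R}$, $a=2$ and $b=1$ give $(a-b)^{+}\wedge b=1$; the true identity is $(a-b)^{+}\wedge(b-a)^{+}=0$, which is of no help here. Consequently the sequence $z_{n+1}=\bigl(|x_{y_{n+1}}|-z_{1}-\cdots-z_{n}\bigr)^{+}$ in your proof of (b) need not be pairwise disjoint (in $\mathbb{R}$ with $|x_{1}|=\tfrac12$, $|x_{2}|=1$ one gets $z_{1}=z_{2}=\tfrac12$), and the same defect kills $\eta_{n}=\bigl(|\varphi_{n}|-\eta_{1}-\cdots-\eta_{n-1}\bigr)^{+}$ in (a). Everything downstream --- the disjointness of the band components $\psi_{n}=Q_{n}\psi$, the KB-space argument, the weak$^{*}$-null argument --- depends on this disjointness, so both contradictions evaporate. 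The repair is precisely the technical heart of the Dodds--Fremlin argument: one must use the weighted disjointification $d_{n+1}=\bigl(x_{n+1}-4^{n}\sum_{i=1}^{n}x_{i}-2^{-n}u\bigr)^{+}$ (the disjointness lemma in \cite{Aliprantis:06}, going back to \cite{Dodds:79}), whose geometric weights force genuine pairwise disjointness while still keeping $\psi(d_{n+1})$ bounded away from $0$ along a subsequence; the naive unweighted subtraction does not achieve this. A secondary caveat for (a): the ``classical consequence'' you invoke (norm-bounded disjoint sequences in $X^{*}_{+}$ are weak$^{*}$ null when $X$ has order continuous norm) is itself essentially equivalent to assertion (a), so even after fixing the disjointification you would need to make sure your source for that fact does not run through the very theorem being proved.
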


 It should be mentioned here that, in general, if $X$ has order continuous norm, the closed unit ball of $X^*$ is not necessarily almost order bounded. As an example consider the Banach lattice $c_0$.

An operator $T:X\to X$ is said to be \term{semi-compact} if $T(B_X)$ is almost order bounded in $X$. By the discussion above, it immediately follows that every almost order bounded operator on a Banach lattice with order continuous norm is weakly compact. Recall that $X$ has the \term{positive Schur property} whenever $0\leq x_n\to 0$ weakly implies $x_n\to 0$. If $X$ has the positive Schur property, then every relatively weakly compact in $X$ is almost order bounded (\cite[Theorem 3.14]{GX14}). Hence, when $X$ has the positive Schur property, classes of weakly compact and semi-compact operators coincide. In particular, the latter applies in the case of $L^1$-spaces. An order bounded operator $T$ on $X$ is said to be \term{AM-compact} whenever it maps order bounded sets to relatively compact sets.

%

\vspace{5mm}
\section{Nilpotent commutators} \label{Nilpotents}

If a positive operator $C$ can be written as a commutator $AB-BA$ of positive operators $A$ and $B$ with one of them nilpotent, then \cite[Lemma 2.1]{Drnovsek:11} implies that $C$ is nilpotent.
The following theorem shows that, under mild technical assumptions, the converse also holds.

\begin{theorem}\label{nilpotent positive operator}
Let $C$ be an order continuous positive operator on a Banach lattice $X$ with the projection property.
Then $C$ is nilpotent if and only if there exist a positive central operator  $A$ and a positive order continuous nilpotent operator $B$ such that  $C=AB-BA.$ Furthermore, if $C$ is compact, then also $B$ can be chosen to be compact.
\end{theorem}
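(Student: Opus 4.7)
The forward direction is immediate from \cite[Lemma 2.1]{Drnovsek:11}, which is cited right above the statement. So the content is the converse: given a nilpotent $C\ge 0$ with nilpotency index $n$, construct central $A\ge 0$ and order continuous nilpotent $B\ge 0$ with $AB-BA=C$. The plan is to mimic the finite-dimensional argument: if $C$ is a strictly upper-triangular positive matrix and $A=\diag(\lambda_1,\ldots,\lambda_n)$ with $\lambda_1>\cdots>\lambda_n>0$, then the equation $AB-BA=C$ forces $B_{ij}=C_{ij}/(\lambda_i-\lambda_j)$, which is positive for $i<j$ and trivially $0$ otherwise. The Banach lattice version just replaces the standard basis decomposition by a decomposition coming from the absolute kernels of the powers of $C$.

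Concretely, I would work with the chain
\[
\{0\}\subseteq\mathcal N(C)\subseteq\mathcal N(C^2)\subseteq\cdots\subseteq\mathcal N(C^n)=X.
\]
Order continuity of $C$ makes each $\mathcal N(C^k)$ a band, and the projection property of $X$ lets me take the band projections $P_k$ onto $\mathcal N(C^k)$ (with $P_0=0$). Set $Q_k:=P_k-P_{k-1}$; these are pairwise disjoint positive band projections summing to $I$. The key structural fact is that $C$ pushes each $\mathcal N(C^{k+1})$ into $\mathcal N(C^k)$: indeed, if $x\in\mathcal N(C^{k+1})$ then $|Cx|\le C|x|$ gives $C^k|Cx|\le C^{k+1}|x|=0$. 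Translated into projections this says $Q_iCQ_j=0$ unless $i<j$, whence
\[
C=\sum_{i<j} Q_i C Q_j.
\]

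With this decomposition in hand I would choose any reals $\lambda_1>\lambda_2>\cdots>\lambda_n>0$ and set
\[
A=\sum_{k=1}^{n}\lambda_k Q_k,\qquad B=\sum_{i<j}\frac{1}{\lambda_i-\lambda_j}\,Q_i C Q_j.
\]
Both are positive; $0\le A\le\lambda_1 I$ makes $A$ central; a direct computation using $Q_iQ_j=\delta_{ij}Q_i$ gives $AB-BA=\sum_{i<j}(\lambda_i-\lambda_j)(\lambda_i-\lambda_j)^{-1}Q_iCQ_j=C$. Nilpotence of $B$ is built into the construction: $B$ strictly lowers the grading determined by the $Q_k$, so $B(Q_jX)\subseteq Q_1X\oplus\cdots\oplus Q_{j-1}X$, and iteration yields $B^n=0$. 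Order continuity of $B$ is inherited from $C$ and the band projections, and if $C$ is compact then so is each $Q_iCQ_j$ and hence $B$.

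The one place that needs genuine thought, rather than formal manipulation, is the upper-triangular decomposition $C=\sum_{i<j} Q_iCQ_j$: this is where the hypotheses are actually used (order continuity of $C$ to ensure the absolute kernels are bands, the projection property of $X$ to turn them into projection bands, and positivity of $C$ together with the inequality $|Cx|\le C|x|$ to propagate absolute kernels forward under $C$). Once that identity is in place, the rest of the argument is essentially the matrix computation translated into the projection language, with the choice of strictly decreasing $\lambda_k$ dictated precisely by the need for $B$ to come out positive.
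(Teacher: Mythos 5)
Your proof is correct, and it uses the same band decomposition as the paper ($X_i=\mathcal N(C^i)\cap\mathcal N(C^{i-1})^d$, available because order continuity of $C$ makes each absolute kernel $\mathcal N(C^k)$ a band and the projection property makes it a projection band), but the construction is organized differently. The paper proceeds by induction on the nilpotency index: it handles $C^2=0$ by hand, applies the induction hypothesis to the compression $\widetilde C$ of $C$ to $X_1^d$ to get $\widetilde A,\widetilde B$, and then glues with $A=\diag(\alpha I,\widetilde A)$ for $\alpha>r(\widetilde A)$ and $B'=PC(I-P)(\alpha I-\widetilde A)^{-1}$ defined via a Neumann series. You instead write down $A$ and $B$ in closed form in one step: $A=\sum_k\lambda_k Q_k$ with strictly decreasing $\lambda_k>0$ and $B=\sum_{i<j}(\lambda_i-\lambda_j)^{-1}Q_iCQ_j$, after first establishing the strict upper triangularity $C=\sum_{i<j}Q_iCQ_j$ from $C(\mathcal N(C^{k+1}))\subseteq\mathcal N(C^k)$ (your inequality $C^k|Cx|\le C^{k+1}|x|$ is exactly the point, and it also subsumes the paper's separate argument that the diagonal block $E$ vanishes in the base case). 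If one unrolls the paper's recursion, its $A$ becomes a diagonal operator with decreasing scalars on the bands $Q_kX$ and its $B$ has blocks $(\lambda_i-\lambda_j)^{-1}Q_iCQ_j$, so the two constructions produce essentially the same operators; your version avoids the induction and the resolvent entirely, at the small cost of having to verify the triangularity of $C$ for all pairs $i\ge j$ up front rather than one layer at a time. The remaining checks in your writeup (positivity and centrality of $A$, positivity, order continuity, nilpotency and, when $C$ is compact, compactness of $B$, and the telescoping computation $AB-BA=C$) are all sound.
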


\begin {proof}
We will prove the theorem by induction on the nilpotency index of the operator $C$.
If $C=0$, then $A=I$ and $B=0$ meet the requirements of the theorem.

Suppose now that $C$ is nonzero and $C^2=0$. Since $C$ is order continuous and $X$ has the projection property, the absolute kernel $\mathcal N(C)$ is a projection band in $X$. With respect to the band decomposition $X=\mathcal N(C)\oplus \mathcal N(C)^d$, the operator $C$ can be written as a $2\times 2$ block operator matrix
$$C=\left[\begin {array}{cc}
0 & D\\
0 & E
\end {array}
\right].$$

We claim that $E=0$. In order to prove this, pick an arbitrary positive vector $x\in \mathcal N(E)^d$.  From $C^2=0$ we conclude $DE=0$ and $E^2 = 0$, and so $C\left[\begin{array}{c}
0 \\ Ex
\end{array}\right]=0.$ Therefore, $\left[\begin{array}{c}
0 \\ Ex
\end{array}\right] \in \mathcal N(C)\cap \mathcal N(C)^d=\{0\}.$
This proves $E=0$.

If we define
$$A=\left[\begin {array}{cc}
I&0\\
0 & 0
\end {array}
\right] \qquad \textrm{and}
\qquad
B=C=\left[\begin {array}{cc}
0&D\\
0 & 0
\end {array}
\right],$$ we have $C=AB-BA.$ It is obvious that $A$ is central and $B$ is nilpotent and order continuous.

Suppose now that the theorem holds for every order continuous nilpotent operator with its nilpotency index at most $n$ and choose an order continuous positive operator $C$ with $C^{n+1}=0$ and $C^n\neq 0$.
Then $X$ can be decomposed as a band direct sum
\begin{align}\label{razcep}
X=X_1\oplus \cdots \oplus X_{n+1}
\end{align}
where $X_i=\mathcal N(C^{i})\cap \mathcal N(C^{i-1})^d$ for $i=1,\ldots,n+1.$
With respect to the decomposition (\ref{razcep}) the operator  $C$ can be written as a $(n+1)\times (n+1)$-upper-triangular block operator matrix
$$C=
\left[
\begin {array}{cccccc}
0 & C_{1,2} & \cdots & C_{1,n+1} \\
 & \ddots  & \ddots & \vdots\\
& & \ddots & C_{n,n+1}\\
& &  & 0
\end {array}
\right]
.$$
Let $\widetilde C$ be the compression of $C$ to the band $X_1^d.$ Then $\widetilde C^n=0$, so that by the induction hypothesis there exist a positive central operator $\widetilde A$ and a positive order continuous nilpotent operator $\widetilde B$ on $X_1^d$ such that
$\widetilde C=\widetilde A\widetilde B-\widetilde B\widetilde A.$
If $C$ is compact, then $\widetilde C$ is compact, so that by the induction hypothesis the operator $\widetilde B$ can also be chosen to be compact.

Let us choose a positive number $\alpha$ greater than the spectral radius of $\widetilde{A}$. Then
$$(\alpha I-\widetilde A)^{-1}=\sum_{n=0}^\infty \frac{\widetilde{A}^n}{\alpha^{n+1}}$$
is a positive central operator on $X_1^d.$ Let $P$ be the band projection onto $X_1.$
If we define the operator $B' : X_1^d \to X_1$ by
$$ B'=PC(I-P)(\alpha I-\widetilde A)^{-1}, $$
the operator
$$B=\left[\begin {array}{cc}
0 & B'\\
0 &\widetilde B
\end {array}\right]$$ is order continuous, positive and nilpotent.
If $C$ is compact, then $B'$ is also compact, so that $B$ is compact.
Since $\widetilde A$ is central on $X_1^d$,
the operator
$$A=\left[\begin {array}{cc}
\alpha I & 0\\
0 &\widetilde A
\end {array}\right]$$ is central on $X$.
We finish the proof by an easy calculation
$$AB-BA=\left[\begin {array}{cc}
0 & \alpha B'-B'\widetilde A\\
0 & \widetilde A\widetilde B-\widetilde B\widetilde A
\end {array}\right]
=\left[\begin {array}{cc}
0 & PC(I-P)\\
0 & \widetilde A\widetilde B-\widetilde B\widetilde A
\end {array}\right]=C . $$
\end {proof}

As an immediate consequence of \Cref{nilpotent positive operator}, we obtain the following corollary
that can be considered as an order analog of Shoda's result.

\begin{corollary}
A positive matrix $C$ can be written as a commutator of positive matrices $A$ and $B$ if and only if $C$ is nilpotent.
Moreover, we can choose $A$ to be diagonal and $B$ to be permutation similar to a strictly upper-triangular matrix.
\end{corollary}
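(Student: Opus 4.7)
The plan is to deduce the corollary from \Cref{nilpotent positive operator} together with two observations specific to finite dimensions. The ``only if'' direction is already part of the background: since every matrix is compact, the result of \cite{Drnovsek:12,Gao:14} quoted in the Introduction forces any positive commutator $AB-BA$ of positive matrices to be quasinilpotent, and hence nilpotent.

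For the converse, I would observe that $\mathbb R^n$ with its coordinatewise order is an order continuous Banach lattice with the projection property, and every linear operator on $\mathbb R^n$ is automatically order continuous and compact. Thus \Cref{nilpotent positive operator} applies and yields a positive central operator $A$ and a positive nilpotent operator $B$ with $C = AB - BA$. As recorded in the Preliminaries, central operators on $\mathbb R^n$ are precisely the diagonal matrices, so $A$ is diagonal.

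The only remaining task is to verify that $B$ can be taken permutation similar to a strictly upper-triangular matrix. This is visible directly from the inductive construction in the proof of \Cref{nilpotent positive operator}: the operator $B$ is block-upper-triangular with zero diagonal blocks with respect to the band decomposition $X = X_1 \oplus \cdots \oplus X_{n+1}$ coming from the iterated absolute kernels of $C$. In $\mathbb R^n$ every band is a coordinate subspace, so this decomposition induces a partition of $\{1,\ldots,n\}$ into disjoint index sets $I_1,\ldots,I_{n+1}$; reordering the standard basis so that the indices of $I_1$ come first, those of $I_2$ next, and so on produces a permutation matrix $P$ for which $P^{-1}BP$ is strictly upper-triangular. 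The only point worth double-checking is that the inductive construction really does produce zero diagonal blocks, and this is immediate since at each recursive step the new $(1,1)$-block is $0$ and the lower-right block is the inductively strictly upper-triangular $\widetilde B$. I do not anticipate any genuine obstacle beyond this bookkeeping.
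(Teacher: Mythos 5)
Your proposal is correct and follows exactly the route the paper intends: the corollary is stated there as an immediate consequence of \Cref{nilpotent positive operator}, with the ``only if'' direction coming from the quoted quasinilpotency result (every matrix being compact) and the ``if'' direction from specializing the theorem to $\mathbb R^n$, where central means diagonal and the recursively built $B$ is block strictly upper-triangular with respect to a band (i.e.\ coordinate) decomposition. Your extra bookkeeping about the permutation similarity of $B$ is the right way to make the ``immediate'' step explicit, and it checks out.
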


In the rest of this section we characterize positive commutators of positive nilpotent matrices.
We first prove a necessary condition in more general setting.

\begin{proposition} \label{two_super}
Let $X$ be at least $2$-dimensional Banach lattice with the projection property, and let $A$ and $B$ be positive
nilpotent order continuous operators on $X$. If  the commutator $AB-BA$ is positive, then there exists a finite band decomposition
$$ X= X_1 \oplus \cdots \oplus X_n $$
with respect to which it has a $2$-super upper-triangular matrix form.
\end{proposition}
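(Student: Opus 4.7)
The plan is to construct, by induction on $j$, a strictly increasing chain of projection bands $\{0\} = Y_0 \subsetneq Y_1 \subsetneq \dots \subsetneq Y_n = X$ with $A(Y_j), B(Y_j) \subseteq Y_{j-1}$ for each $j$. Setting $X_j := Y_j \cap Y_{j-1}^d$ then yields a band decomposition $X = X_1 \oplus \dots \oplus X_n$ with respect to which both $A$ and $B$ are \emph{strictly} upper-triangular. An elementary computation of block entries shows that both the diagonal and the first super-diagonal of $C = AB - BA$ vanish --- for instance, in $C_{i,i+1} = \sum_k (A_{ik} B_{k,i+1} - B_{ik} A_{k,i+1})$ strict upper-triangularity of $A$ forces $k > i$ and of $B$ forces $k < i+1$ in each summand --- giving the desired $2$-super upper-triangular form.

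To define the chain, set $Y_0 = \{0\}$ and, given $Y_j$, put $Y_{j+1} := \mathcal{N}(P_{Y_j^d}\, A) \cap \mathcal{N}(P_{Y_j^d}\, B)$. Since $A$, $B$ and the band projection $P_{Y_j^d}$ are positive and order continuous, these absolute kernels are bands, and the projection property of $X$ makes them (and their intersection) projection bands. By construction $Y_{j+1}$ is the largest band whose image under both $A$ and $B$ lies in $Y_j$; in particular $Y_{j+1} \supseteq Y_j$.

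The main step is to show that $Y_j \ne X$ forces $Y_{j+1} \supsetneq Y_j$. On the complementary band $Z_j = Y_j^d$ (also a Banach lattice with the projection property), consider the compressions $\widetilde A = P_{Z_j}\, A|_{Z_j}$ and $\widetilde B = P_{Z_j}\, B|_{Z_j}$. Domination gives $0 \le \widetilde A^k y \le A^k y$ for $y \in Z_j^+$, so $\widetilde A$ and $\widetilde B$ are positive, order continuous and nilpotent on $Z_j$. Crucially, the semi-commutation survives the compression: since $A(Y_j), B(Y_j) \subseteq Y_j$, both $P_{Z_j} A P_{Y_j}$ and $P_{Z_j} B P_{Y_j}$ vanish, giving
\[
  \widetilde A \widetilde B\, y - \widetilde B \widetilde A\, y \;=\; P_{Z_j}(AB - BA)y \;\ge\; 0 \qquad (y \in Z_j^+).
\]
Now $\mathcal{N}(\widetilde B)$ is a nonzero band (every positive nilpotent on a nonzero vector lattice has nontrivial absolute kernel), and the inequality above makes it $\widetilde A$-invariant: if $\widetilde B |x| = 0$, then $\widetilde B |\widetilde A x| \le \widetilde B \widetilde A |x| \le \widetilde A \widetilde B |x| = 0$. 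The restriction $\widetilde A|_{\mathcal{N}(\widetilde B)}$ is again positive and nilpotent, so its absolute kernel --- which coincides with $\mathcal{N}(\widetilde A) \cap \mathcal{N}(\widetilde B) \subseteq Z_j$ --- is nonzero, and any element of it lies in $Y_{j+1} \setminus Y_j$.

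Termination comes from a rearrangement trick. Writing $BA = AB - C$ with $C \ge 0$, any adjacent swap in a word in $A$ and $B$ satisfies $w_1 B A w_2 \le w_1 A B w_2$; iterating, every word of length $m$ in $A$ and $B$ is dominated above by $A^a B^b$ with $a + b = m$. Taking $m = p + q - 1$ where $A^p = B^q = 0$ forces $a \ge p$ or $b \ge q$, hence $A^a B^b = 0$; being positive and bounded above by $0$, the word itself must vanish. Consequently the band $W_m := \{x \in X : T|x| = 0 \text{ for every word } T \text{ in } A, B \text{ of length } m\}$ equals $X$ when $m = p+q-1$, and an easy induction (using that $A(W_m), B(W_m) \subseteq W_{m-1}$) gives $W_m \subseteq Y_m$, so $Y_{p+q-1} = X$. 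I expect the main technical point to be preservation of the semi-commutation under compression: a priori, $P_{Z_j}(AB - BA)$ expands into several cross-terms, and it is precisely the invariance $A(Y_j), B(Y_j) \subseteq Y_j$ encoded in the definition of $Y_j$ that removes the unwanted ones.
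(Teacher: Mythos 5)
Your proof is correct, and its endgame is the same as the paper's: produce one band decomposition with respect to which \emph{both} $A$ and $B$ are strictly upper triangular, and then note that the diagonal and first superdiagonal blocks of $AB-BA$ vanish automatically. The difference lies in how the flag of bands is obtained. The paper gets it in one line: by \cite[Lemma 2.2]{Drnovsek:11} (whose proof is exactly your rearrangement/termination argument) the sum $D=A+B$ is nilpotent; one takes $X_i=\mathcal N(D^{i})\cap\mathcal N(D^{i-1})^d$, with respect to which $D$ is strictly upper triangular, and the domination $0\le A,B\le D$ then forces $A$ and $B$ to be strictly upper triangular as well. Your construction rebuilds this by hand. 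In fact your $W_m$ coincides with $\mathcal N\bigl((A+B)^m\bigr)$ --- for $x\ge 0$ one has $(A+B)^m x=\sum_w wx$ over all words $w$ of length $m$, a sum of positive terms --- so your termination step already produces a flag $\{0\}\subseteq W_1\subseteq\cdots\subseteq W_{p+q-1}=X$ with $A(W_m),B(W_m)\subseteq W_{m-1}$, and the entire ``main step'' (the compressions $\widetilde A,\widetilde B$, the invariance of $\mathcal N(\widetilde B)$, and the strict growth of the $Y_j$) is logically superfluous: strictness of the chain is not needed for the statement, since trivial summands can simply be discarded. What your longer route buys is self-containedness (you reprove the cited lemma rather than invoking it) and a possibly shorter flag; what the paper's route buys is brevity.
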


\begin{proof}
Since $AB-BA\geq 0$,
\cite[Lemma 2.2]{Drnovsek:11} implies that $D:=A+B$ is nilpotent as well. Let $n$ be the nilpotency index of $D$.

If $n=1$, then $D=0$ and hence $A=B=0$. Therefore, $AB-BA$ is $2$-super strictly upper-triangular for any band decomposition $X=X_1 \oplus X_2$.

If $n>1$, then we decompose $X$ as $X=X_1 \oplus \cdots \oplus X_n$ where $X_i=\mathcal N(D^{i})\cap \mathcal N(D^{i-1})^d$ for $i=1,\ldots,n.$ With respect to this decomposition, we have
$$ D= \left[
\begin {array}{cccccc}
0 & D_{1,2} & \cdots & D_{1,n} \\
 & \ddots  & \ddots & \vdots\\
& & \ddots & D_{n-1,n}\\
& &  & 0
\end {array}
\right]  .  $$
Since $0\leq A,B \leq D$, we also have
$$ A=
\left[
\begin {array}{cccc}
0 & A_{1,2} & \cdots & A_{1,n} \\
 & \ddots  & \ddots & \vdots\\
& & \ddots & A_{n-1,n}\\
& &  & 0
\end {array}
\right] \qquad \! \! \! \textrm{and} \! \! \!
\qquad
B=
\left[
\begin {array}{cccc}
0 & B_{1,2} & \cdots & B_{1,n} \\
& \ddots  & \ddots & \vdots\\
& & \ddots & B_{n-1,n}\\
& &  & 0
\end {array}
\right]   .  $$
Now it is obvious that $C=AB-BA$ is $2$-super strictly upper-triangular.
\end{proof}

We now show the converse assertion in the case of matrices.

\begin{proposition}\label{jordan}
Suppose that $n \geq 3$ and $2 \leq k \leq n$. Then every  $k$-super upper-triangular positive matrix
$C \in \mathbb M_n(\mathbb C)$ can be written as $AB-BA$, where $A$ is a positive nilpotent matrix
and $B=J_n^{k-1}$.
\end{proposition}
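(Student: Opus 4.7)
The plan is to give an explicit formula for $A$ and verify three properties: $A \geq 0$, $A$ strictly upper-triangular (hence nilpotent), and $AB - BA = C$. To motivate the formula, note that $B = J_n^{k-1}$ has entries $B_{lm} = 1$ when $m - l = k - 1$ and $0$ otherwise, so a direct calculation gives
$$ (AB)_{ij} = A_{i, j - k + 1}, \qquad (BA)_{ij} = A_{i + k - 1, j}, $$
with the convention that an $A$-entry having either index outside $\{1, \ldots, n\}$ equals $0$. Hence the equation $AB - BA = C$ is equivalent to the recursion $A_{i, j} = A_{i + k - 1, j + k - 1} + C_{i, j + k - 1}$ along each shifted-diagonal chain $(i, j), (i+k-1, j+k-1), (i+2(k-1), j+2(k-1)), \ldots$, which eventually leaves the index range.

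Iterating this recursion and letting out-of-range $A$-entries vanish leads naturally to the definition
$$ A_{i, j} := \sum_{s \geq 0} C_{i + s(k-1),\; j + (s + 1)(k-1)}, $$
where $C$-entries with indices outside $\{1, \ldots, n\}$ are taken to be $0$; the sum is then finite.

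With this choice, $A \geq 0$ is immediate from $C \geq 0$. Strict upper-triangularity follows by observing that each summand $C_{i + s(k-1),\, j + (s+1)(k-1)}$ has column-minus-row equal to $(j - i) + (k - 1)$; for $j \leq i$ this quantity is at most $k - 1$, so the summand vanishes by the $k$-super upper-triangularity of $C$. The commutator identity $(AB - BA)_{ij} = C_{ij}$ is then verified by observing that the sum defining $A_{i, j - k + 1}$ differs from the sum defining $A_{i + k - 1, j}$ by exactly the $s = 0$ term $C_{i, j}$ of the former, so the two telescope to yield $A_{i, j - k + 1} - A_{i + k - 1, j} = C_{i, j}$.

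The main obstacle lies in the boundary cases $j < k$ or $i > n - k + 1$, where the truncation conventions drop either the $(AB)_{ij}$ or $(BA)_{ij}$ contribution. In each such case one checks that $C_{ij}$ itself is zero (since $k$-super upper-triangularity forces $C_{ij} = 0$ whenever $j - i \leq k - 1$, and this inequality holds at the boundary) and that the remaining $A$-entry is also zero (as it lies in the strictly lower-triangular part of $A$, already proven to vanish), so the identity $(AB - BA)_{ij} = C_{ij}$ holds uniformly across all $(i, j)$.
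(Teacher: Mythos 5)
Your proof is correct and is essentially the paper's proof in entrywise form: your formula $A_{i,j}=\sum_{s\ge 0}C_{i+s(k-1),\,j+(s+1)(k-1)}$ is exactly the matrix $A = C(J_n^T)^{k-1}+J_n^{k-1}C(J_n^T)^{2(k-1)}+\cdots$ used in the paper, and both arguments verify $AB-BA=C$ by the same telescoping. Your treatment of the boundary indices is a careful spelling-out of what the paper compresses into the identity $S(J_n^T)^{k-1}J_n^{k-1}=S$ for matrices $S$ whose first $k-1$ columns vanish.
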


\begin{proof}
Define the positive strictly upper-triangular matrix by
$$ A = C \! \cdot \! (J_n^T)^{k-1} + J_n^{k-1} \! \cdot \! C \! \cdot \! (J_n^T)^{2(k-1)} +
J_n^{2(k-1)} \! \cdot \! C \! \cdot \! (J_n^T)^{3(k-1)} + \ldots  . $$
Note that the number of sumands is finite, as $J_n$ is nilpotent.
Then we have
$$ A B = C + J_n^{k-1} \! \cdot \! C \! \cdot \! (J_n^T)^{k-1} +
J_n^{2(k-1)} \! \cdot \! C \! \cdot \! (J_n^T)^{2(k-1)} + \ldots  = C + B A , $$
since $S \cdot (J_n^T)^{k-1} \cdot J_n^{k-1} = S$ for each matrix $S$ having the first $(k-1)$ columns
equal zero.
\end{proof}

The following characterization follows easily from Propositions  \ref{two_super} and \ref{jordan}.

\begin{corollary}
A positive matrix $C \in \mathbb M_n(\mathbb R)$ is permutation similar to a $2$-super strictly upper-triangular matrix
if and only if there exist positive nilpotent matrices $A$ and $B$ such that $C=AB-BA$.
\end{corollary}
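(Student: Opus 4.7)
The plan is to obtain both implications as direct consequences of \Cref{two_super} and \Cref{jordan}, exploiting the fact that $\mathbb R^n$ is a finite-dimensional Banach lattice with the projection property on which every operator is automatically order continuous.

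For the forward direction (the case $n=1$ being trivial), I would suppose $C = AB - BA$ with positive nilpotent matrices $A, B \in \mathbb M_n(\mathbb R)$ and invoke \Cref{two_super} to obtain a band decomposition $\mathbb R^n = X_1 \oplus \cdots \oplus X_m$ against which $C$ takes $2$-super upper-triangular block form. The next step uses the standard fact that every band in $\mathbb R^n$ is spanned by a subset of the standard basis vectors, so such a decomposition is nothing but a partition $\{1,\ldots,n\} = I_1 \sqcup \cdots \sqcup I_m$. I would then choose the permutation that lists the indices of $I_1$ first, then those of $I_2$, and so on; conjugating $C$ by the associated permutation matrix produces a matrix whose diagonal blocks and first super-diagonal blocks vanish. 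A brief index count shows that any surviving nonzero entry sits at position $(i,j)$ with $j - i \ge 2$, so after permutation $C$ is $2$-super strictly upper-triangular in the usual entry-wise sense.

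For the converse, I would first pass to a permutation-conjugate of $C$ so as to assume $C$ itself is $2$-super strictly upper-triangular; this is harmless because conjugation by a permutation matrix preserves positivity and nilpotency and transports the commutator identity back through the same conjugation. For $n \ge 3$ I would apply \Cref{jordan} with $k = 2$ to obtain $C = AB - BA$ with $A$ a positive nilpotent matrix and $B = J_n$, which is itself positive and nilpotent. The residual cases $n \le 2$ collapse to $C = 0$, for which $A = B = 0$ works.

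No step poses a serious obstacle. The only point that requires a little care is the index-counting step in the forward direction, which upgrades the block-level $2$-super upper-triangularity furnished by \Cref{two_super} into entry-wise $2$-super strict upper-triangularity after permutation. Once this bookkeeping is confirmed, the corollary is essentially a direct transcription of the two preceding propositions into the matrix setting.
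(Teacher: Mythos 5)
Your argument is correct and is precisely the route the paper intends: the paper offers no written proof beyond the remark that the corollary ``follows easily from Propositions \ref{two_super} and \ref{jordan}'', and your proposal supplies exactly the missing details (bands in $\mathbb R^n$ are coordinate subspaces, the index count upgrading block-level to entry-wise $2$-super triangularity, and the application of \Cref{jordan} with $k=2$ so that $B=J_n$ is positive and nilpotent). Nothing further is needed.
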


\vspace{5mm}
\section{Quasinilpotent commutators}\label{Quasinilpotents}

If a positive operator $C$ on a Banach lattice is a commutator of two positive operators with at least one of them
compact, then $C$ is a quasinilpotent operator, by \cite{Drnovsek:12} or \cite{Gao:14}.
The following theorem provides conditions on $C$ under which the converse assertion is also true.

\begin {theorem}
Let $C = (c_{i,j})_{i,j=1}^{\infty}$ be a positive operator on the space $l^p$ ($1\le p \le \infty$) such that
$\sum_{i=1}^{\infty} \sum_{j=1}^{\infty} \sqrt{c_{i,j}} < \infty$. Then the following assertions are equivalent:
\begin{itemize}

\item[(a)] $C= A B - B A$  for a diagonal positive operator $A$ on $l^p$ and
a quasinilpotent compact positive operator $B$ on $l^p$;

\item[(b)] $C= A B - B A$  for some positive operators $A$ and $B$  on $l^p$ with at least one of them compact;

\item[(c)] $C$ is a quasinilpotent operator.

\end{itemize}

\end {theorem}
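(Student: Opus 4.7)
The equivalence is proved as a cycle. The implication $(a)\Rightarrow(b)$ is trivial since the operator $B$ produced in $(a)$ is already compact, and $(b)\Rightarrow(c)$ is the quasinilpotency theorem from \cite{Drnovsek:12,Gao:14} cited at the start of this section. So the substance of the theorem lies in $(c)\Rightarrow(a)$.

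The strategy is to take $A=\diag(\alpha_i)$ and set $b_{i,j}=c_{i,j}/(\alpha_i-\alpha_j)$ when $c_{i,j}>0$ (and $b_{i,j}=0$ otherwise), so that $(AB-BA)_{i,j}=(\alpha_i-\alpha_j)b_{i,j}=c_{i,j}$ is automatic. For $B$ to be positive we need $\alpha_i>\alpha_j$ wherever $c_{i,j}>0$, and for $B$ to be compact a quantitative lower bound on $\alpha_i-\alpha_j$ is required. Two structural observations make such $(\alpha_i)$ available. First, positivity of $C$ together with $r(C)=0$ forces $c_{i,i}\le r(C)=0$, so the diagonal of $C$ vanishes. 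Second, the support graph $G=\{(i,j):c_{i,j}>0\}$ of $C$ is acyclic, because any directed cycle $i_1\to\cdots\to i_k\to i_1$ would give $(C^k)_{i_1,i_1}\ge c_{i_1,i_2}\cdots c_{i_k,i_1}>0$, contradicting $r(C)=0$. Writing $M:=\sum_{i,j}\sqrt{c_{i,j}}<\infty$, I would then set
\[
\alpha_i=\sup\Bigl\{\textstyle\sum_{k\ge 0}\sqrt{c_{i_k,i_{k+1}}}\ :\ i=i_0\to i_1\to i_2\to\cdots\text{ is a directed path in }G\Bigr\}.
\]
Because $G$ is acyclic every such path has distinct vertices and therefore distinct edges, so its weight is bounded by $M$; this yields $0\le\alpha_i\le M$, so $A$ is a bounded positive diagonal operator on $l^p$. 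Prepending the edge $i\to j$ to any path starting at $j$ gives $\alpha_i\ge\sqrt{c_{i,j}}+\alpha_j$ whenever $c_{i,j}>0$, i.e.\ $\alpha_i-\alpha_j\ge\sqrt{c_{i,j}}$.

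With these $\alpha_i$ the matrix $B=(b_{i,j})$ is positive and satisfies $b_{i,j}\le\sqrt{c_{i,j}}$, so $\sum_{i,j}b_{i,j}\le M<\infty$; this makes $B$ a nuclear, and hence compact, operator on $l^p$, and $AB-BA=C$ holds by construction. To verify quasinilpotency: the support of $B$ sits inside $G$ and is therefore acyclic, so $(B^k)_{i,i}=0$ for all $i$ and $k$. Viewing $B$ as a trace-class operator on $l^2$, Lidskii's theorem gives $\sum_n\lambda_n(B)^k=\mathrm{tr}(B^k)=0$ for every $k\ge 1$, and the expansion $\log\det(I-zB)=-\sum_{k\ge 1}\tfrac{z^k}{k}\mathrm{tr}(B^k)$ shows that the Fredholm determinant is identically $1$, so $B$ has no nonzero eigenvalue on $l^2$. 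When $1\le p\le 2$, every $l^p$-eigenvector of $B$ lies in $l^2$; when $2\le p\le\infty$, one passes to the transpose $B^\top$ acting on $l^{p'}$ with $p'\le 2$. Compactness of $B$ then gives $r(B)=0$ on every $l^p$. The chief obstacle to anticipate is the simultaneous balancing of the $\alpha_i$---bounded so that $A\in\mathcal B(l^p)$, yet sufficiently separated so that $B$ is nuclear---and this is precisely what the square-root summability hypothesis delivers.
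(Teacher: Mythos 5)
Your proof is correct, and its skeleton coincides with the paper's: a diagonal $A$ whose entries form a potential dropping by at least $\sqrt{c_{i,j}}$ across each nonzero entry of $C$, the definition $b_{i,j}=c_{i,j}/(\alpha_i-\alpha_j)$, the bound $b_{i,j}\le\sqrt{c_{i,j}}$ giving summability of the entries of $B$, and hence boundedness on $l^1$ and $l^\infty$ with compactness on $l^p$ in between. You differ in two sub-steps. First, the paper invokes \cite[Section 2]{MMR09} to produce a total order $\succeq$ on $\mathbb N$ with $c_{i,j}=0$ whenever $i\succeq j$, and then sets $d_i=\sum_j\sqrt{c_{i,j}}$ and $a_k=\sum_{i\succeq k}d_i$, so that $a_i-a_j=\sum_{i\preceq k\prec j}d_k\ge d_i\ge\sqrt{c_{i,j}}$; you instead derive acyclicity of the support graph directly from quasinilpotency and positivity and define $\alpha_i$ as a supremum of path weights, which is self-contained, avoids the external citation, and yields the same separation inequality. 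Second, for quasinilpotency of $B$ the paper simply applies Ringrose's theorem to the compact operator $B$, which is triangular with zero diagonal relative to the chain induced by $\succeq$; you instead note that acyclicity forces $\mathrm{tr}(B^k)=0$ for all $k$, apply Lidskii's theorem and the Fredholm determinant on $l^2$, and then transfer to general $l^p$ by inclusion of eigenvectors or duality. This works (the $p=\infty$ case should be phrased via $B$ on $l^\infty$ being the adjoint of $B^{\top}$ on $l^1$), but it is noticeably heavier machinery than Ringrose's theorem, which applies directly on each $l^p$. The implications (a)$\Rightarrow$(b) and (b)$\Rightarrow$(c) are handled identically in both arguments.
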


\begin {proof}
The implication (a) $\Rightarrow$ (b) is clear, and the implication (b) $\Rightarrow$ (c) holds by \cite{Drnovsek:12} or \cite{Gao:14}.

Let us prove that (c) implies (a). It follows from the considerations in \cite[Section 2]{MMR09}
that there exists a total order $\succeq$ on $\mathbb N$ such that $c_{i,j} = 0$ whenever $i \succeq j$.
Define
$$ d_i = \sum_{j=1}^{\infty}  \sqrt{c_{i,j}} = \sum_{j \succ i} \sqrt{c_{i,j}} \ \ \ \textrm{and} \ \
a_k = \sum_{i \succeq k} d_i . $$
Then $\sum_{i=1}^{\infty} d_i = \sum_{i=1}^{\infty} \sum_{j=1}^{\infty} \sqrt{c_{i,j}} < \infty$ and
$d_i \ge \sqrt{c_{i,j}}$ for all $i$ and $j$.
Let $A$ be a diagonal matrix with diagonal entries $(a_i)_{i=1}^{\infty}$, and
let $B = (b_{i,j})_{i,j=1}^{\infty}$ be a matrix defined by
$b_{i,j} = 0$ if $c_{i,j} = 0$, and
$$ b_{i,j} = {c_{i,j} \over \sum_{i \preceq k \prec j} d_k}  \textrm{     otherwise.} $$
Since $\sum_{i=1}^{\infty} d_i < \infty$, $A$ defines a bounded positive operator on $l^p$. Since
$$ b_{i,j} \leq {c_{i,j} \over d_i} \le {c_{i,j} \over \sqrt{c_{i,j}}} = \sqrt{c_{i,j}} , $$
we have
$$ \sum_{i=1}^{\infty} \sum_{j=1}^{\infty} b_{i,j} \leq
    \sum_{i=1}^{\infty} \sum_{j=1}^{\infty} \sqrt{c_{i,j}} < \infty , $$
and so $B$ defines a bounded operator on $l^1$ and on $l^\infty$ with the corresponding norms
$\|B\|_1$ and $\|B\|_\infty$ less than or equal to $\sum_{i=1}^{\infty} \sum_{j=1}^{\infty} b_{i,j}$.
Moreover, in both cases the operator $B$ is compact as it can be written as a limit of a sequence of finite-rank operators.
It follows from \cite[Theorem 1.6.1]{Davies:90} that $B$ is also a compact operator on $l^p$ $(1<p<\infty)$.
Finally, Ringrose's Theorem (see e.g. \cite[Theorem 7.2.3]{RR00}) implies that $B$ is also quasinilpotent.

Given $i$ and $j$ with $i \prec j$, the $(i,j)$ entry of $AB-BA$ is equal to
$$ a_i b_{i,j} - b_{i,j} a_j = b_{i,j} (a_i - a_j) = b_{i,j}  \cdot \sum_{i \preceq k \prec j} d_k = c_{i,j} . $$
If $i \succeq j$, then $b_{i,j} = c_{i,j} = 0$.  It follows that  $C= A B - B A$, and so the proof is complete.
\end {proof}

There is a quasinilpotent compact positive operator on $l^2$ that cannot be written as a commutator of a diagonal positive
operator and  a positive operator on $l^2$. Furthermore, it cannot be written as a commutator of a positive
operator and a quasinilpotent positive operator on $l^2$ with one of them compact. For the construction of our example we need the following lemma.

\begin{lemma}\label{nicelni diagonalci}
If $S=(s_{i,j})_{i,j=1}^\infty:l^2\to l^2$ is a quasinilpotent positive operator,
then $s_{k,k}=0$ for each $k\in \mathbb N.$
\end{lemma}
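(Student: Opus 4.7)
The plan is to bound the spectral radius of $S$ from below by each diagonal entry $s_{k,k}$, which, combined with quasinilpotence $r(S) = 0$, forces $s_{k,k} = 0$.

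First, I would observe that since $S$ is positive, every iterate $S^n$ has a nonnegative matrix, and its $(k,k)$ entry equals
\[
(S^n)_{k,k} = \sum_{i_1,\ldots,i_{n-1} = 1}^\infty s_{k,i_1} s_{i_1,i_2} \cdots s_{i_{n-1},k},
\]
which, being a sum of nonnegative terms, is at least as large as the single summand obtained by choosing $i_1 = i_2 = \cdots = i_{n-1} = k$. That summand is $s_{k,k}^n$, so $(S^n)_{k,k} \geq s_{k,k}^n$.

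Next, denoting by $e_k$ the standard unit vector, the inequality
\[
\norm{S^n} \geq \norm{S^n e_k}_2 \geq \bigl|\langle S^n e_k, e_k\rangle\bigr| = (S^n)_{k,k} \geq s_{k,k}^n
\]
holds for every positive integer $n$. Taking $n$-th roots and passing to the limit gives
\[
r(S) = \lim_{n \to \infty} \norm{S^n}^{1/n} \geq s_{k,k}.
\]

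Finally, since $S$ is quasinilpotent, $r(S) = 0$, and therefore $s_{k,k} = 0$ for every $k \in \mathbb N$. There is essentially no obstacle here; the only point worth emphasizing is that positivity of $S$ is used twice, once to extract the single summand $s_{k,k}^n$ from a sum of nonnegative terms, and once (implicitly) to ensure $S^n$ is a well-defined matrix with nonnegative entries, so the verification is routine.
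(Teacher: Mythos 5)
Your argument is correct and is essentially the paper's proof: both rest on the chain $0 \le s_{k,k}^n \le \langle S^n e_k, e_k\rangle \le \|S^n\|$ together with $r(S)=0$. You merely spell out the intermediate step $(S^n)_{k,k} \ge s_{k,k}^n$ via the nonnegative matrix expansion, which the paper leaves implicit.
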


\begin{proof}
Since $S$ is positive,  we first conclude
$$0 \leq s_{k,k}^n\leq \langle S^ne_{k},e_k\rangle\leq \|S^n\|$$
for all $k,n\in\mathbb N$, where $\{e_k\}_{k=1}^\infty$ is the standard basis of $l^2$.
Now, quasinilpotency of $S$ implies that $s_{k,k}=0$ for each $k\in\mathbb N$.
\end{proof}

\begin{example}\label{shift}
Let $\{w_i\}_{i=1}^{\infty}$ be a decreasing
sequence of positive real numbers converging to $0$ slowly enough that
the series $\sum_{i=1}^{\infty} w_i$ is divergent. Then the matrix
$$ C= \left[
\begin {array}{ccccc}
0 & w_1 & 0 & 0  & \cdots \\
0 & 0 & w_2 & 0  & \cdots \\
0 & 0 & 0 &  w_3  & \cdots \\
\vdots & \vdots & \vdots & \ddots & \ddots
\end {array}
\right]  $$
defines a weighted shift on the space $l^2$ that is a quasinilpotent compact positive operator.

$\bullet$ \emph{The operator $C$ cannot be written as a commutator between a positive diagonal operator and a positive operator.}

Assume that there exist a diagonal positive operator $A = \diag(a_1, a_2, \ldots)$ and
a positive operator $B = (b_{i,j})_{i,j=1}^{\infty}$ on $l^2$ such that $C = A B - B A$.
Then  $w_i = a_i b_{i,i+1} - b_{i,i+1} a_{i+1} = b_{i, i+1} (a_i - a_{i+1})$ for all $i \in \mathbb N$.
In particular, we have $a_i > a_{i+1}$ for all $i \in \mathbb N$. Therefore,  for any $n \in \mathbb N$, we have
$$ \sum_{i=1}^{n} w_i = \sum_{i=1}^{n} b_{i, i+1} (a_i - a_{i+1}) \le
\|B\| \cdot \sum_{i=1}^{n} (a_i - a_{i+1}) \le \|B\| \cdot a_1 , $$
which contradicts the condition that $\sum_{i=1}^{\infty} w_i$ is a divergent series. A similar argument shows that $C$ cannot be written as $AB-BA$ with $A$ positive and $B$ positive and diagonal.

$\bullet$ \emph{The operator $C$ cannot be written as a commutator between two positive operators where at least one of them is compact and at least one of them is quasinilpotent.}

Assume that there exist positive operators $A =  (a_{i,j})_{i,j=1}^{\infty}$ and
$B = (b_{i,j})_{i,j=1}^{\infty}$ on $l^2$  such that $C = A B - B A$. Suppose that at least one of $A$ and $B$ is compact and suppose also that at least one of them is quasinilpotent. Here we do not exclude the possibility that one of them is simultaneously compact and quasinilpotent.

We claim that the corresponding matrices of $A$ and $B$ are necessarily upper-triangular. In order to prove this, we define $D:=A+B$ and observe that $D$ semi-commutes with $A$ and $B$.
If $D$ is ideal-irreducible, then \cite[Corollary 3.5]{Gao:14} yields  $C=AB-BA=0$ which is a contradiction.
Hence $D$ is ideal-reducible. An application of the Ideal-triangularization lemma (see e.g. \cite{Drnovsek:09}) implies that $D$ is ideal-triangularizable. Since every closed ideal that is invariant under $D$ is also invariant under both $A$ and $B$, it is also invariant under $C$. However, closed ideals invariant under $C$ are precisely those which are of the form
$\textrm{span}\{e_1,\ldots,e_n\}$ for some $n$. This proves the claim.

If $A$ is quasinilpotent, then the matrix corresponding to the operator $A$ is strictly upper-triangular by \Cref{nicelni diagonalci}.
From $C=AB-BA$ it follows
$$w_i=a_{i,i+1}b_{i+1,i+1}-a_{i,i+1}b_{i,i}=a_{i,i+1}(b_{i+1,i+1}-b_{i,i}),$$ for all $i\in\mathbb N$
so that $b_{i+1,i+1}>b_{i,i}.$  Then
\begin{align*}
\sum_{i=1}^{n} w_i& = \sum_{i=1}^{n} a_{i, i+1} (b_{i+1, i+1} - b_{i,i}) \le
\|A\| \cdot  \sum_{i=1}^{n} (b_{i+1, i+1} - b_{i,i})\\
 &= \|A\|(b_{n+1,n+1}-b_{1,1})\leq  \|A\| \cdot \|B\|,
\end{align*}
which again contradicts the condition that $\sum_{i=1}^{\infty} w_i$ is a divergent series.
The case when $B$ is quasinilpotent can be treated similarly.
%

%
\end{example}

The following question still remains open.

\begin{question}
Is it possible to express the operator $C$ from \Cref{shift} as a commutator $AB-BA$ of positive operators $A$ and $B$ with at least one of them compact?
\end{question}

Observe that the operator $C$ from \Cref{shift} can be written as a commutator of positive operators, by  \Cref{positive A B} in the next section.

\vspace{5mm}
\section{Compact commutators}\label{compacts}

The $l^p$-\term{direct sum} ($1\le p < \infty$) of Banach spaces (resp., Banach lattices) $X_1, X_2, \ldots$ is
the Banach space (resp., Banach lattice)
$$ \left(\displaystyle{\oplus_{n=1}^\infty} X_n\right)_p = \left\{x = (x_1, x_2, \ldots) : x_n \in X_n,
\|x\|^p = \sum\limits_{n=1}^\infty \|x_n\|^p < \infty \right\} . $$
Similarly, the $l^\infty$-\term{direct sum} is defined by
$$ \left(\displaystyle{\oplus_{n=1}^\infty} X_n\right)_\infty = \left\{x = (x_1, x_2, \ldots) : x_n \in X_n,
\|x\| = \sup_{n \in \mathbb N} \|x_n\| < \infty \right\} . $$
A Banach space $X$ admits a \term{Pelczy\'nski decomposition} if
$X=\left(\displaystyle{\oplus_{n=1}^\infty} X_n\right)_p$, where  $1\le p \le \infty$ and
$X_n$ is isometric to $X$ for all $n$.
Let us introduce its order analog.
We say that a Banach lattice $X$ admits an \term{order Pelczy\'nski decomposition}
if $X=\left(\displaystyle{\oplus_{n=1}^\infty} X_n\right)_p$, where  $1\le p \le \infty$ and, for  each $n$,
$X_n$ is a band of $X$ isometric and order isomorphic to $X$.
In this case the Banach lattice $(X \oplus X)_p$ is isometric and order isomorphic to $X$,
and so (by induction) this is also true for any finite direct sums.

For the proof of \Cref{positive A B} which is the main result of this paper, we need the following proposition.
It should be noted that the assertion in (a) is a part of \cite[Theorem 126.3]{Zaanen:83}.

\begin{proposition}\label{monotoni normni pad}
Let $\{S_n\}_{n=1}^{\infty}$  and $T$ be positive operators on a Banach lattice $X$ such that
$T\geq S_n\downarrow 0$. Then $\|S_n\|\downarrow 0$ in either of the following cases.
\begin{enumerate}
\item [(a)] $T$ is semi-compact, and $X$ and $X^*$ have order continuous norms.
\item [(b)] $T$ is semi-compact, $T^*$ is AM-compact and $X$ has order continuous norm.
\item [(c)] $T$ is AM-compact, $T^*$ is semi-compact and $X^*$ has order continuous norm.
\end{enumerate}
\end{proposition}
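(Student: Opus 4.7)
The plan is to reduce each of the three cases to a common inequality of the form $\|S_n\| \le 2\varepsilon + r_n$ with $r_n \to 0$, for arbitrary $\varepsilon > 0$, and to verify that the residual vanishes using the compactness and order-continuity hypothesis appropriate to each case.

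For cases (a) and (b), semi-compactness of $T$ produces $u \in X^+$ with $\|(Tx-u)^+\| < \varepsilon$ for every $x \in B_X^+$; order continuity of $X$ together with \Cref{Dodds-Fremlin}(a) then furnishes $\psi \in X^*_+$ with $(|\varphi|-\psi)^+(u) < \varepsilon$ for every $\varphi \in B_{X^*}$. Splitting $S_n x = S_n x \wedge u + (S_n x - u)^+$ and bounding the second summand by $(Tx - u)^+$, together with the Dodds--Fremlin estimate $\varphi(v) \le \psi(v) + \varepsilon$ valid on $v \in [0,u]$ for $\varphi \in B_{X^*}^+$, yields after taking suprema
\[
\|S_n\| \le \|S_n^*\psi\| + 2\varepsilon.
\]
For case (c) the dual construction---semi-compactness of $T^*$, together with $X^*$ order continuous and \Cref{Dodds-Fremlin}(b)---produces $\psi \in X^*_+$ and $y \in X^+$ and yields the symmetric inequality $\|S_n\| = \|S_n^*\| \le \|S_n y\| + 2\varepsilon$.

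The residual step is then handled case by case. Case (a) is immediate: $X^*$ order continuous and $S_n^*\psi \downarrow 0$ in $X^*$ give $\|S_n^*\psi\| \to 0$. For case (b), AM-compactness of $T^*$ should supply the relative norm compactness of $\{S_n^*\psi\}$; combined with the monotone decrease $S_n^*\psi \downarrow 0$ and the norm closedness of $X^*_+$, any norm limit of a subsequence $S_{n_k}^*\psi$ satisfies $0 \le \eta \le S_{n_k}^*\psi$ for all $k$, hence $\eta \le \inf_k S_{n_k}^*\psi = 0$, forcing $\|S_n^*\psi\| \to 0$. Case (c) is entirely symmetric: AM-compactness of $T$ should produce relative norm compactness of $\{S_n y\}$, and the monotone decrease $S_n y \downarrow 0$ then forces any subsequential norm limit to vanish.

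The principal obstacle is justifying the relative norm compactness of $\{S_n^*\psi\}$ (respectively $\{S_n y\}$) in cases (b) and (c). AM-compactness of $T^*$ (resp.\ $T$) guarantees relative compactness of the image $T^*[0,\psi]$ (resp.\ $T[0,y]$), but the sequence of interest lies a priori only in the larger order interval $[0, T^*\psi]$ (resp.\ $[0, Ty]$), which is generally not norm compact. Bridging this gap---for instance by approximating each $S_n^*\psi$ (resp.\ $S_n y$) by an element of the compact image via the domination $S_n^* \le T^*$ (resp.\ $S_n \le T$), or by exploiting the monotone convergence $(T - S_n)^* \uparrow T^*$ (resp.\ $(T - S_n) \uparrow T$) together with the compactness information---is the technical heart of the proof.
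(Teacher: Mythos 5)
Your reduction is exactly the one the paper uses: the same splitting $S_nx=(S_nx-y)^{+}+S_nx\wedge y$, the same appeal to \Cref{Dodds-Fremlin}, and the same resulting estimates $\|S_n\|\le 2\epsilon+\|S_n^*\psi\|$ in cases (a), (b) and $\|S_n\|\le 2\epsilon+\|S_ny\|$ in case (c). Your disposal of case (a) --- order continuity of the norm of $X$ makes $\psi$ order continuous, hence $S_n^*\psi\downarrow 0$, and order continuity of the norm of $X^*$ then gives $\|S_n^*\psi\|\downarrow 0$ --- is complete and correct; the paper instead simply cites \cite[Theorem 126.3]{Zaanen:83} for (a), so here you give a legitimate self-contained substitute.

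The difficulty is that in cases (b) and (c) you stop precisely where the proof has to be finished. You correctly observe that AM-compactness of $T^*$ only gives relative compactness of $T^*[0,\psi]$, whereas the sequence $\{S_n^*\psi\}$ is a priori known only to lie in the order interval $[0,T^*\psi]$; you then declare bridging this gap to be ``the technical heart of the proof'' and do not bridge it. An argument that explicitly defers its central step is an argument with a gap. For comparison, the paper closes this step by asserting that $[0,T^*\psi]$ is itself a relatively compact order interval (this is where the hypothesis on $T^*$ is consumed), extracting a norm-convergent subsequence $S_{n_k}^*\psi$, and invoking \cite[Theorem 15.3]{Zaanen:97} to conclude that a decreasing sequence with a norm-convergent subsequence converges in norm to its order infimum, so $\|S_{n_k}^*\psi\|\downarrow 0$; since $n\mapsto\|S_n^*\psi\|$ is decreasing, the whole sequence follows, and (c) is handled symmetrically with $[0,Ty]$ and $\|S_ny\|$. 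Your own identification of the subsequential limit ($0\le\eta\le S_{n_k}^*\psi$ for all $k$, hence $\eta\le\inf_k S_{n_k}^*\psi=0$) is essentially that lemma and is fine --- but it only becomes usable once the relative norm compactness of the sequence has been secured, and that is the one thing your write-up does not supply. It is fair to note that the paper's own justification of the compactness of $[0,T^*\psi]$ is a single unproved clause, so your unease is not unreasonable; nevertheless, as a proof of (b) and (c) your proposal is incomplete.
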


In the proof we will use the identity
$w=(w-z)^++w\wedge z$ which holds for arbitrary vectors $w$ and $z$ of a given vector lattice.

\begin{proof}
As already observed, (a) follows from \cite[Theorem 126.3]{Zaanen:83}.

To see (b) pick $\epsilon>0$. Since $T$ is semi-compact, there exists $y\in X^+$
such that $\|(Tx-y)^+\|<\epsilon$ for all $0\leq x\in B_X.$
By \Cref{Dodds-Fremlin}(a), there exists $0\leq \psi \in X^*$ such that $(\varphi-\psi)^+(y)<\epsilon$ for each
$0\leq \varphi \in B_{X^*}.$ Therefore, for $0\leq x\in B_X$ and $0\leq \varphi\in B_{X^*}$ we have
\begin{align*}
\varphi(S_nx)&=\varphi((S_nx-y)^+)+\left[(\varphi-\psi)^++\varphi\wedge \psi\right](y\wedge S_nx)\\
&\leq \varphi((Tx-y)^+)+(\varphi-\psi)^+(y)+ \psi(S_nx)\\
&\leq 2\epsilon + \|S_n^*\psi\|.
\end{align*}
It remains to show that  $\|S_n^*\psi\| \downarrow 0$, because we then obtain that $\|S_n\| \downarrow 0$.
Since the norm of $X$ is order continuous, the functional $\psi$ is order continuous, so that $S_n\downarrow 0$ implies $S_n^*\psi\downarrow 0.$  On the other hand, the sequence
$\{S_n^*\psi\}_{n\in\mathbb N}$ is contained in the relatively compact interval $[0,T^*\psi]$. Hence, there is a convergent subsequence $\{S_{n_k}^* \psi\}_{k\in\mathbb N}$ of the sequence $\{S_n^* \psi\}_{n\in\mathbb N}.$ Since $S_{n_k}^*\psi\downarrow 0$, by \cite[Theorem 15.3]{Zaanen:97} we have
$\|S_{n_k}^*\psi \| \downarrow 0$.
Since $S_n^*\psi \downarrow 0$, we finally conclude that $\|S_n^*\psi\| \downarrow 0$.

The proof of (c) is very similar to the proof of (b). First choose $\epsilon>0$. By semi-compactness of $T^*$,
 we can find $0\leq \psi\in X^*$ with $\|(T^*\varphi-\psi)^+\|<\epsilon$ for each $0\leq \varphi\in B_{X^*}.$
By \Cref{Dodds-Fremlin}(b), there exists $0\leq y\in X$ with
$\psi((x-y)^+)<\epsilon$ for each $0\leq x\in B_X.$ As in (b), we obtain
\begin{align*}
(S_n^*\varphi)(x)&= (S_n^*\varphi-\psi)^+(x)+(S_n^*\varphi\wedge \psi)((x-y)^++x\wedge y)\\
&\le 2\epsilon+\|S_ny\|.
\end{align*}
As in (b), one can prove $\|S_ny\| \downarrow 0.$
\end{proof}

\begin{corollary}
Let $T$ be a positive compact operator on a Banach lattice $X$, and let
$\{S_n\}_{n=1}^{\infty}$ be positive operators on $X$ such that
$T\geq S_n\downarrow 0$.
If $X$ or $X^*$ has order continuous norm, then $\|S_n\|\downarrow 0.$
\end{corollary}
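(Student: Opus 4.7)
The plan is to reduce the corollary to \Cref{monotoni normni pad} by verifying that a positive compact operator $T$ satisfies all the auxiliary hypotheses (semi-compactness and AM-compactness of $T$ and $T^*$) that the two applicable cases of the proposition ask for.

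First I would check that every compact operator on a Banach lattice is semi-compact. Given $\varepsilon > 0$, relative compactness of $T(B_X)$ yields a finite $\varepsilon$-net $\{y_1,\dots,y_m\}$. Setting $u = |y_1| \vee \cdots \vee |y_m|$, each $y_i$ lies in $[-u,u]$, so for every $x \in B_X$ there is an $i$ with $\|Tx - y_i\| < \varepsilon$ and thus $Tx \in [-u,u] + \varepsilon B_X$. Hence $T(B_X)$ is almost order bounded, i.e. $T$ is semi-compact. By Schauder's theorem $T^*$ is compact, so the same argument shows that $T^*$ is semi-compact as well.

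Next I would observe that $T$ is automatically AM-compact: order intervals in a Banach lattice are norm bounded, so $T$ carries them into relatively compact sets. Applying this to $T^*$ (again compact) gives that $T^*$ is AM-compact too.

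Now the corollary falls out of \Cref{monotoni normni pad}. If $X$ has order continuous norm, then part (b) applies: $T$ is semi-compact and $T^*$ is AM-compact, so $\|S_n\|\downarrow 0$. If instead $X^*$ has order continuous norm, then part (c) applies: $T$ is AM-compact and $T^*$ is semi-compact, again yielding $\|S_n\|\downarrow 0$. I do not foresee a real obstacle; the only nontrivial input is the standard fact that relatively compact subsets of a Banach lattice are almost order bounded, which is essentially a one-line net argument as above.
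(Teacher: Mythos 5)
Your proposal is correct and matches the paper's intended argument: the corollary is stated as an immediate consequence of \Cref{monotoni normni pad}, and the paper itself notes (in the proof of \Cref{positive A B}) that compact operators are both semi-compact and AM-compact, which together with Schauder's theorem gives exactly your reduction to parts (b) and (c).
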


The proof of the following main theorem is inspired by the proof of  \cite[Theorem 3.3]{Schneeberger:71}.

\begin{theorem}\label{positive A B}
Suppose that the Banach lattice $X = \Pelc{X}$ is an order Pelczy\'nski decomposition of $X$, where $1\leq p\leq \infty$.
Let $Y$ be a Banach lattice with the property that there exist positive operators $S: Y \to X$ and $T: X \to  Y$
such that $\|S\| = 1$ and $T S$ is the identity operator on $Y$.
If $C$ is a positive operator on $(Y \oplus X)_p$, then $C$ is a commutator between two positive operators
in each of the following cases:
\begin{enumerate}
\item[(a)] $C$ is semi-compact, and $X$ and $X^*$ have order continuous norms.
\item [(b)] $C$ is semi-compact, $C^*$ is AM-compact and $X$ has order continuous norm.
\item [(c)] $C$ is AM-compact, $C^*$ is semi-compact and $X^*$ has order continuous norm.
\item [(d)] $T$ is compact, and either $X$ or $X^*$ have order continuous norms.
\end{enumerate}
\end{theorem}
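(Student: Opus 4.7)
My plan is to adapt the Schneeberger-style construction of \cite[Theorem~3.3]{Schneeberger:71} to the positive setting, building $A$ and $B$ as positive block-operator matrices with respect to the order Pelczy\'nski decomposition of $X$, and using the operators $S$ and $T$ to bridge the $Y$-summand to the first $X$-summand.

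\textbf{Setup.} Using $X = \Pelc{X}$, write $V := (Y \oplus X)_p$ as a $p$-direct sum of bands $V = V_0 \oplus V_1 \oplus V_2 \oplus \cdots$, where $V_0 = Y$ and each $V_n$ ($n \ge 1$) is a band that is isometrically and order isomorphic to $X$ via a fixed lattice isomorphism $J_n\colon X \to V_n$. Let $Q_n$ denote the band projection of $V$ onto $V_0 \oplus \cdots \oplus V_n$, and represent $C$ as a positive block matrix $[C_{j, k}]_{j, k \ge 0}$.

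\textbf{Tail estimate.} The sequence $D_n := (I - Q_n)\, C\, (I - Q_n)$ is positive, decreasing, dominated by $C$, and satisfies $D_n \downarrow 0$ strongly (using order continuity of $X$ or $X^*$, which forces $I - Q_n \downarrow 0$ strongly on the $X$-summands). Under (a)--(c), \Cref{monotoni normni pad} applied with $C$ in place of the dominating operator yields $\|D_n\| \downarrow 0$. Under (d), the identity $TS = I_Y$ and the compactness of $T$ force $I_Y$ to be compact, so $Y$ is finite-dimensional, and the Corollary of \Cref{monotoni normni pad} yields the same conclusion. This quantitative decay of the tails of $C$ in operator norm is the essential input for what follows.

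\textbf{Construction of $A$ and $B$.} Choose positive scalars $\lambda_n$ that are bounded above and below, with their rate matched to the decay of $\|D_n\|$. Define the positive weighted left-shift $A \colon V \to V$ by setting $A|_{V_1} := \lambda_0\, T$ (after the identification $V_1 \cong X$), $A|_{V_{n+1}} := \lambda_n\, J_n J_{n+1}^{-1}$ for $n \ge 1$, and zero on all other block entries; then $A$ is a bounded positive operator. Define $B$ blockwise by setting $B_{0, k} := 0$ for all $k$ and solving the blockwise form of $AB - BA = C$ via the recursion
\begin{equation*}
\lambda_0\, T\, B_{1, k} = C_{0, k}\qquad (k \ge 0),
\end{equation*}
\begin{equation*}
\lambda_j\, B_{j+1, k} - B_{j, k-1}\cdot A|_{V_k} = C_{j, k} \qquad (j \ge 1,\ k \ge 1),
\end{equation*}
together with $\lambda_j\, B_{j+1, 0} = C_{j, 0}$ for $j \ge 1$. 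The identity $TS = I_Y$ gives the positive solution $B_{1, k} = \lambda_0^{-1}\, S\, C_{0, k}$ of the first relation, after which the second determines each $B_{j+1, k}$ positively in terms of $C_{j, k}$ and $B_{j, k-1}$. Unfolding the recursion expresses every $B_{j+1, k}$ as a finite positive diagonal sum of shifted blocks $C_{j-i, k-i}$ together with a positive boundary term built from $S$, so $B \ge 0$ blockwise.

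\textbf{Main obstacle.} The heart of the proof is to verify that these blocks assemble into a bounded operator on $V$. Assembled diagonally, $B$ has the form $B = \sum_{m \ge 0} B^{(m)}$, where $B^{(m)}$ is (up to the positive rescaling by the $\lambda_n$'s) the shifted copy $R^m C L^m$ of $C$, with $R$ and $L$ the block right- and left-shifts on $V$ associated to $S$, $T$, and the $J_n$'s. Convergence of this series in operator norm, and hence boundedness of $B$, is secured by the tail decay $\|D_n\| \downarrow 0$ established in the tail estimate together with a careful choice of the weights $\lambda_n$; this balancing act (keeping $A$ bounded while simultaneously forcing the partial sums defining $B$ to be Cauchy in operator norm) is where the technical work of the argument resides. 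Once $B$ is known to be bounded and positive, the identity $AB - BA = C$ is confirmed by the block-by-block construction.
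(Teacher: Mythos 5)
Your construction is a mirror image of the paper's: the paper puts the unweighted block shift in $B$ (with $S$ in position $(1,0)$) and writes $A$ down explicitly as finite sums of blocks of $C$ along each diagonal, whereas you put a weighted shift in $A$ and solve for $B$ recursively. Up to this transposition the two constructions agree, and your recursion does unfold into the same diagonal sums. The problem is that both of the mechanisms you offer for the step you yourself call the heart of the proof are defective. First, the series $B=\sum_{m\ge 0}B^{(m)}$ with $B^{(m)}$ essentially $R^mCL^m$ cannot converge in operator norm: the $(m,m)$ block of $B^{(m)}$ is (a bounded-below multiple of) the fixed block $C_{00}$, so $\|B^{(m)}\|\ge \mathrm{const}\cdot\|C_{00}\|$ for every $m$, and no choice of weights $\lambda_n$ that are bounded above and below can repair this; tail decay of $C$ is irrelevant here because every block of $C$, including $C_{00}$, reappears in every $B^{(m)}$. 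What actually proves boundedness is an entrywise estimate $\|B_{jk}\|\le \mathrm{const}\cdot c_{k-j}$, where $c_d$ sums the norms of the blocks of $C$ on the $d$-th diagonal, followed by the observation that a Toeplitz matrix of norms with $\sum_d c_d=\sum_{i,j}\|C_{ij}\|<\infty$ is bounded on $l^1$ and $l^\infty$, hence on $l^p$ by Riesz--Thorin. Second, that summability does not follow from the qualitative decay $\|D_n\|\downarrow 0$ you establish, which can be arbitrarily slow. The missing idea is the paper's regrouping step: pass to a subsequence with $\max\{\|CP_{n_k}\|,\|P_{n_k}C\|\}<\epsilon_k$ for a prescribed sequence satisfying $\sum_k(2k+1)\epsilon_k<\infty$, and then amalgamate the bands $X_{n_k+1}\oplus\cdots\oplus X_{n_{k+1}}$ into new copies of $X$ using the order Pelczy\'nski decomposition again; only after this renormalization of the decomposition is $\sum_{i,j}\|C_{ij}\|<\infty$ available. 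Note also that you need the one-sided estimates $\|C(I-Q_n)\|\to 0$ and $\|(I-Q_n)C\|\to 0$ (both supplied by \Cref{monotoni normni pad}), not merely the two-sided $\|(I-Q_n)C(I-Q_n)\|\to 0$, since blocks $C_{ij}$ with only one large index must be controlled to make the diagonal sums $c_d$ small for large $|d|$.

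A smaller point: your treatment of case (d) does not work as written. Compactness of $T$ makes $I_Y=TS$ compact and $Y$ finite-dimensional, but that says nothing about the tails of $C$ on the $X$-part, so the Corollary to \Cref{monotoni normni pad} (which needs the dominating operator, here $C$, to be compact) does not apply. The paper's proof handles (d) by noting that a compact $C$ is semi-compact with AM-compact adjoint and reducing to cases (b) and (c); this only makes sense if (d) is read as ``$C$ is compact,'' which is also how the subsequent $L^1$ corollary uses it.
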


\begin{proof}
We will simultaneously prove (a), (b) and (c). The case (d) then follows from (b) and (c), because compact operators
are both semi-compact and AM-compact.
Observe first that $1 = \|T S \| \le \|T\| \|S\| = \|T\|$.  Put $X_0 = Y$. The Banach lattice
$(Y \oplus X)_p =  \left(\displaystyle{\oplus_{n=0}^\infty} X_n\right)_p$ is
isometric and order isomorphic to $(Y \oplus X \oplus X \oplus X \oplus \cdots)_p$.
Under this isomorphism, the operator $C$ can be represented as
an infinite block operator matrix $[C_{ij}]_{i,j=0}^\infty$.

Choose a decreasing sequence $ \{\epsilon_n\}_{n=2}^\infty$ of positive real numbers such that the series
$\sum_{n=2}^\infty (2n+1) \epsilon_n$ converges. For each $n \ge 0$,
let $P_n: Y \oplus X \to Y \oplus X$ be the band projection on  the band
$\left(\displaystyle{\oplus_{k=n}^\infty} X_k\right)_p$.
In each of the cases (a), (b) and (c), the sequences $\{\|C P_n\|\}_{n=1}^\infty$  and $\{\|P_n C\|\}_{n=1}^\infty$
converge to $0$ by \Cref{monotoni normni pad}. Therefore, there exists a subsequence $\{P_{n_k}\}_{k=2}^\infty$
of the sequence $\{P_{n}\}_{n}$  such that $ \max\{\|C P_{n_k}\|, \|P_{n_k} C\|\} < \epsilon_k$ for all $k \ge 2$.
Put $n_1=0$. For $k=1, 2, \ldots$, the Banach lattice
$$ X^{(k)}=(X_{n_k+1} \oplus X_{n_k+2} \oplus \cdots \oplus X_{n_{k+1}})_p $$
is isometric and order isomorphic to $X$. It follows that
$X = \left(\displaystyle{\oplus_{k=1}^\infty} X^{(k)}\right)_p$
is also an order Pelczy\'nski decomposition of $X$. Therefore, without loss of generality
we can assume from the beginning that  $\max\{\|C P_n\| , \|P_n C\|\} < \epsilon_n$ for all $n \ge 2$.
Then the entries of the matrix $[\|C_{ij}\|]_{i,j=0}^\infty$ are dominated by the entries of the matrix
$$\begin{bmatrix}
\|C_{00}\| & \|C_{01}\| &\epsilon_2 & \epsilon_3 &\hdots\\
\|C_{10}\| & \|C_{11}\| &\epsilon_2 & \epsilon_3 &\hdots\\
\epsilon_2 & \epsilon_2 &\epsilon_2 & \epsilon_3 &\hdots\\
\epsilon_3 & \epsilon_3 &\epsilon_3 & \epsilon_3 &\hdots\\
\vdots     &\vdots     & \vdots     & \vdots     & \ddots
\end{bmatrix}.$$
Indeed, if $\max\{i, j\} \ge 2$, then
$$ \|C_{ij}\| \le  \|P_i C P_j\| \le \min\{ \|P_i C \|, \|C P_j\|\} <  \epsilon_{\max\{i,j\}} \ . $$

Now we define an infinite matrix $B$ by
$$ B = \begin{bmatrix}
0 & 0 & 0 & 0 & 0 & \hdots\\
S & 0 & 0 & 0 & 0 & \hdots\\
0 & I  & 0 & 0 & 0 & \hdots\\
0 & 0  & I & 0  & 0 & \hdots\\
0 & 0  & 0 & I  & 0 & \hdots\\
\vdots & \vdots & \vdots & \vdots & \vdots  & \ddots
\end{bmatrix} , $$
and a matrix $A=[A_{ij}]_{i,j=0}^\infty$ in the following way:
$$\begin{array}{lcc}
A_{i0}=0, & &i\geq 0\\
A_{i 1} = C_{i0} \, T, & &i\geq 0\\
A_{0 j} = C_{0, j-1}, & &j \geq 2\\
A_{ij}=\sum\limits_{k=1}^i C_{i-k+1,j-k}+S \, C_{0,j-i-1}, ,& &1 \leq i<j \\
A_{ij}=\sum\limits_{k=1}^{j-1} C_{i-k+1,j-k}+ C_{i-j+1,0} \, T, & & i\geq j \geq 2.
\end{array}$$
A direct calculation shows the formal equality $C=AB-BA$. Clearly, $B$ defines a bounded operator on  $(Y \oplus X)_p$ that is also positive. It remains to show the same for the operator $A$.
In order to prove this, we introduce the sequence $\{c_i\}_{i=-\infty}^\infty$ as follows:
$$ \begin{array}{lcc}
c_i=\sum\limits_{k=0}^\infty \|C_{k,k+i-1}\|, && i \geq 1,\\
c_i=\sum\limits_{k=0}^\infty \|C_{k-i+1,k}\|, && i \leq 1.
\end{array}$$
We also define the matrix $U=[u_{ij}]_{i,j=0}^\infty$ by $u_{ij}=c_{j-i}$.
Then $U$ is constant on each diagonal, and since
$$  \sum_{i=-\infty}^\infty c_i=\sum_{i,j=0}^\infty \|C_{ij}\| \leq
\sum_{i,j=0}^1 \|C_{ij}\| +\sum_{n=2}^\infty (2 n+1)\epsilon_n<\infty, $$
$U$ defines a bounded operator on both $l^1$ and $l^\infty$.
By the Riesz-Thorin interpolation theorem (see, e.g., \cite{Davies:90}), $U$ induces a bounded operator on $l^p$.
We now claim that $\|A_{ij}\| \leq \|T\| \, u_{ij}$ for all $i, j \ge 0$.
For $i \geq 0$, we have $\|A_{i 1}\| = \|C_{i, 0}\| \|T\|  \le \|T\| c_{1-i} = \|T\| u_{i1}$.
For $j \geq 2$, we have $\|A_{0 j}\| = \|C_{0, j-1}\| \le c_j = u_{0j}$.
If $1 \leq i<j$, then
$$ \|A_{ij}\| \leq \sum\limits_{k=1}^{i+1} \|C_{i-k+1,j-k}\| =
    \sum\limits_{m=0}^{i} \|C_{m, m+j-i-1}\| \leq c_{j-i} =u_{ij} . $$
If $i\geq j \geq 2$, then
$$ \|A_{ij}\| \leq \|T\| \, \sum\limits_{k=1}^{j} \| C_{i-k+1,j-k}\|  = $$
$$ =  \|T\| \, \sum\limits_{m=0}^{j-1} \| C_{m+i-j+1,m}\|  \leq \|T\| \, c_{j-i} =  \|T\| \, u_{ij} . $$
This completes the proof of the claim.

Suppose first $1\leq p<\infty$.
For  $x=(x_0, x_1, x_2, \ldots) \in  \left(\displaystyle{\oplus_{n=0}^\infty} X_n\right)_p$,
we have
$$ \|Ax\|^p=\sum_{i=0}^\infty \left\| \sum_{j=0}^\infty A_{ij} x_j \right\|^p
\leq \sum_{i=0}^\infty \left(\sum_{j=0}^\infty  \|A_{ij}\|\, \|x_j\| \right)^p \leq $$
$$  \leq \|T\|^p \cdot \sum_{i=0}^\infty \left(\sum_{j=0}^\infty u_{ij}\,\|x_j\|\right)^p \leq
 \|T\|^p \cdot \| U (\|x_0\|, \|x_1\|, \|x_2\|, \ldots)\|_p^p \leq $$
$$ \leq  \|T\|^p \cdot \|U\|^p \sum_{j=0}^\infty \|x_j\|^p =  (\|T\| \, \|U\| \, \|x\|)^p . $$
This shows that $A$ defines a bounded operator on the Banach lattice $(Y \oplus X)_p$ that is clearly positive.

Suppose now $p=\infty$. For $x=(x_0,x_1,x_2,\ldots)\in \left(\displaystyle{\oplus_{n=0}^\infty} X_n\right)_\infty$, we have
\begin{align*}
\|Ax\|_\infty&=\sup_{i\in \mathbb N_0}\left\|\sum_{j=0}^\infty A_{ij}x_j \right\|\leq \sup_{i\in\mathbb N_0} \sum_{j=0}^\infty \|A_{ij}\|\, \|x_j\|\\
 &\leq  \|x\| \, \sup_{i\in\mathbb N_0} \sum_{j=0}^\infty \|A_{ij}\|
\leq \|x\|\, \|T\|\sup_{i\in\mathbb N_0} \,\sum_{j=0}^\infty u_{ij}\\
&\leq \|x\|\,\|T\|\,\sum_{i=-\infty}^\infty c_i.
\end{align*}
This proves that $A$ defines a bounded operator on the Banach lattice $(Y\oplus X)_\infty$ which is again clearly positive.
\end{proof}


In the following example we will apply the fact that order intervals in atomic Banach lattices with order continuous norms are always compact by \cite[Theorem 6.1]{Wnuk:99}.

\begin{example}
(a) Consider the identity operator $I:c_0\to c_0$. Then $I$ is AM-compact, and $c_0$ and $c_0^* = l^1$ have order continuous norms. Since $I$ is not a commutator, this example shows that in \Cref{positive A B}(a) we cannot replace semi-compactness by AM-compactness.

(b)  Consider the Banach lattice $c$ of all convergent sequences. Then the Banach lattice dual of $c$ can be identified as $l^1\oplus_1 \mathbb R$ which is atomic and has order continuous norm (see, e.g., \cite[Theorem 16.14]{ABo:06}).
The identity operator $I:c\to c$ is semi-compact, its adjoint is AM-compact, yet $I$ is not a commutator. This example shows that in \Cref{positive A B}(b) we cannot remove order continuity of the norm of $X$.

(c) The identity operator $I:l^1\to l^1$ is AM-compact, its adjoint operator $I:l^\infty\to l^\infty$ is semi-compact, yet
the $I:l^1\to l^1$ is not a commutator. This example shows that in \Cref{positive A B}(c) we cannot omit the assumption that the norm of $X^*$ is order continuous.
\end{example}

\begin{corollary}
Let $C$ be a semi-compact positive operator on a separable infinite dimensional space
$L^p(M, \mu)$ with $1<p<\infty$. Then $C$ is a commutator between two positive operators.
\end{corollary}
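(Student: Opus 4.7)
My plan is to apply \Cref{positive A B}(a). For $1 < p < \infty$, the Banach lattice $L^p(M, \mu)$ is reflexive, so both $L^p(M, \mu)$ and its dual $L^q(M, \mu)$ have order continuous norms. Together with the assumed semi-compactness of $C$, this guarantees that the analytic hypotheses of part~(a) will be met for any band $X$ I choose; the task is therefore reduced to realizing $L^p(M,\mu) \cong (Y \oplus X)_p$ (as Banach lattices) with $X$ admitting an order Pelczy\'nski decomposition and positive operators $S \colon Y \to X$, $T \colon X \to Y$ satisfying $\|S\| = 1$ and $TS = I_Y$.

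I would argue by case analysis based on the atomic/non-atomic structure of $\mu$. Up to isometric order isomorphism, a separable infinite-dimensional $L^p(M,\mu)$ is either $l^p$, the $L^p$-space of a non-atomic separable $\sigma$-finite measure space, or a band direct sum of these; and apart from the case $l^p_n \oplus L^p[0,1]$ with $n$ a positive integer, $L^p(M,\mu)$ itself admits an order Pelczy\'nski decomposition. For $l^p$ one partitions $\mathbb N$ into infinitely many infinite blocks; for a non-atomic $L^p$-space one partitions the underlying measure space into countably many positive-measure sets and rescales each restricted $L^p$-space isometrically back onto the full one; the mixed case with an infinite atomic part combines the two. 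In each of these cases I would take $X = Y = L^p(M,\mu)$ and $S = T = I$, identifying $(Y \oplus X)_p$ with $L^p(M,\mu)$ via the decomposition.

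The remaining case $L^p(M,\mu) \cong l^p_n \oplus L^p[0,1]$ has exactly $n$ atoms and so cannot admit an order Pelczy\'nski decomposition, since such a decomposition would force $0$ or infinitely many atoms. I would split $L^p[0,1] = L^p[0, 1/2] \oplus L^p[1/2, 1]$ and set $Y = l^p_n \oplus L^p[0, 1/2]$, $X = L^p[1/2, 1]$, so that $(Y \oplus X)_p \cong l^p_n \oplus L^p[0,1] \cong L^p(M,\mu)$ while $X$ is order isometric to $L^p[0,1]$ and therefore admits an order Pelczy\'nski decomposition. Splitting $X = L^p(E_1) \oplus L^p(E_2)$ further and choosing pairwise disjoint positive-measure subsets $D_1, \ldots, D_n \subseteq E_1$, I would define $S$ by sending the $i$-th standard basis vector of $l^p_n \subseteq Y$ to $|D_i|^{-1/p} \chi_{D_i}$ and by mapping the $L^p[0, 1/2]$-summand of $Y$ isometrically and order-isomorphically onto $L^p(E_2)$; the resulting $S$ is a positive isometry, so $\|S\| = 1$. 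For $T$ I would use the positive conditional-expectation projection of $L^p(E_1)$ onto $\Span\{\chi_{D_i} : 1 \le i \le n\}$, identified with $l^p_n$ via the natural rescaling, together with the inverse isometry on $L^p(E_2)$.

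Once $(Y, X, S, T)$ is in place, \Cref{positive A B}(a) directly yields $C = AB - BA$ for positive operators $A$ and $B$. The main obstacle is the $l^p_n \oplus L^p[0,1]$ case, where $L^p(M,\mu)$ fails to have an order Pelczy\'nski decomposition and the pair $(S,T)$ must be engineered across the atomic/non-atomic divide; the remaining cases reduce to routine verifications of order Pelczy\'nski decompositions for $l^p$ and for non-atomic $L^p$-spaces.
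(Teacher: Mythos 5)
Your proposal is correct and follows essentially the same route as the paper: reduce via the Bohnenblust classification of separable $L^p$-spaces to the canonical forms, apply \Cref{positive A B}(a) with $Y=X$ and $S=T=I$ whenever the space itself admits an order Pelczy\'nski decomposition, and handle the exceptional case $l^p_n\oplus L^p[0,1]$ by an explicit positive isometric embedding of the atomic part with a positive averaging left inverse. The paper simply takes $Y=l^p_n$, $X=L^p[0,1]$ and $S(e_i)=n^{1/p}\chi_{[(i-1)/n,\,i/n)}$ with $T$ the corresponding normalized integral averaging, rather than splitting $L^p[0,1]$ in half and absorbing one half into $Y$ as you do, but this is only a cosmetic difference.
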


\begin{proof}
By \cite[Theorem 7.1]{Bohnenblust:40}, the space $L^p(M, \mu)$  is isometric and order isomorphic to
one of the following canonical spaces: $l^p$, $L^p[0,1]$, $l^p \oplus L^p[0,1]$ or
$l^p_n \oplus L^p[0,1]$ for some $n \in \mathbb N$.  In the first three cases
Theorem \ref{positive A B} (a) can be applied immediately with $Y = X$ and $S = T = I$.
In the last case we must show that for $Y = l^p_n$ and  $X = L^p[0,1]$  the exist
positive operators $S: Y \to X$ and $T: X \to  Y$ such that $\|S\| = 1$ and
$T S$ is the identity operator on $Y$.

To this end, let $e_1$, $\ldots$, $e_n$ be the standard basis vectors of $l^p_n$, and let $\chi_i \in L^p[0,1]$ be the characteristic function of the interval $[\tfrac{i-1}{n}, \tfrac{i}{n})$, for $i=1, 2, \ldots, n$.
Define the positive operator $S: l^p_n \to L^p[0,1]$ by $S(e_i) = n^{1/p} \, \chi_i$ for $i=1, 2, \ldots, n$, and the positive operator $T: L^p[0,1] \to l^p_n$  by
$$ T f = n^{\frac{p-1}{p}} \cdot \sum_{i=1}^n e_i \cdot \int_{\frac{i-1}{n}}^{\frac{1}{n}} \! f (x) \, dx . $$
Then $\|S\| = 1$ and  $T S$ is the identity operator on $l^p_n$.
\end{proof}

Using Theorem \ref{positive A B} (d) in the preceding proof, we obtain the corresponding result for operators on $L^1$-spaces.

\begin{corollary}
Let $C$ be a compact positive operator on a separable infinite dimensional space
$L^1(M, \mu)$. Then $C$ is a commutator between two positive operators.
\end{corollary}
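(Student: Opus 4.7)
My plan is to follow the structure of the preceding corollary's proof, adapting the invocation of Theorem~\ref{positive A B} to compensate for the fact that the dual $L^{\infty}$ of $L^{1}$ is not order continuous. The first step is to apply Bohnenblust's theorem to reduce $L^{1}(M,\mu)$ to one of the four canonical forms $l^{1}$, $L^{1}[0,1]$, $l^{1}\oplus L^{1}[0,1]$, or $l^{1}_{n}\oplus L^{1}[0,1]$ for some $n\in\mathbb{N}$.

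In the first three cases I take $X=L^{1}(M,\mu)$, $Y=X$, and $S=T=I$, exactly as in the preceding proof, and I use the obvious order Pelczy\'nski decomposition of $X$ into countably many order-isomorphic bands. Part (a) of the main theorem is no longer available because $X^{*}=L^{\infty}$ is not order continuous, so I invoke part (b) instead; both of its extra hypotheses come for free from the compactness of $C$ on $L^{1}$. Indeed, because $L^{1}$ has the positive Schur property, the relatively compact set $C(B_{X})$ is relatively weakly compact, hence almost order bounded, so $C$ is semi-compact. Moreover $C^{*}$ is compact, and since order bounded subsets of $L^{\infty}$ are norm bounded, $C^{*}$ is AM-compact. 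Together with the order continuity of $L^{1}$, these give exactly the hypotheses of (b).

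In the fourth case I take $Y=l^{1}_{n}$ and $X=L^{1}[0,1]$, reusing verbatim (with $p=1$) the positive operators $S(e_{i})=n\chi_{[(i-1)/n,i/n)}$ and $Tf=\sum_{i=1}^{n}e_{i}\int_{(i-1)/n}^{i/n}f(x)\,dx$ from the preceding proof: then $\|S\|=1$, $TS=I$ on $l^{1}_{n}$, and $T$ has rank $n$, hence is compact. Since $X=L^{1}[0,1]$ has order continuous norm, part (d) of the main theorem applies directly. The main obstacle, and the reason compactness rather than just semi-compactness of $C$ has to be assumed throughout, is the failure of order continuity of $L^{\infty}$; the key point that makes the argument work is that the positive Schur property of $L^{1}$ promotes compactness of $C$ to the semi-compactness of $C$ (and AM-compactness of $C^{*}$) demanded by part (b), while in the fourth case the finite rank of $T$ supplies the compactness required by part (d).
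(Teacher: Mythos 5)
Your proof is correct and follows essentially the same route as the paper, whose entire argument is the single sentence ``Using Theorem~\ref{positive A B}\,(d) in the preceding proof, we obtain the corresponding result for operators on $L^1$-spaces.'' Two remarks. First, the hypothesis in part (d) reading ``$T$ is compact'' is evidently a misprint for ``$C$ is compact'': the paper proves (d) by reducing it to (b) and (c) via the observation that compact operators are both semi-compact and AM-compact (which concerns $C$), and as literally stated (d) would be false (take $C=I$ on an infinite-dimensional $(Y\oplus X)_p$ with $Y$ finite-dimensional and $T$ of finite rank). You took the statement at face value and therefore avoided (d) in the first three cases, where $T=I$ is not compact, substituting a direct appeal to (b); but your verification of the hypotheses of (b) --- $C$ semi-compact, $C^*$ AM-compact, $L^1$ order continuous --- is precisely the mechanism by which the paper derives (d), so the two arguments coincide. (Your detour through the positive Schur property is not needed: a relatively compact set is almost order bounded in \emph{any} Banach lattice, by a finite $\epsilon$-net argument, so compactness of $C$ gives semi-compactness directly; likewise compactness of $C^*$ gives AM-compactness since order bounded sets are norm bounded.) Second, in the fourth case your appeal to (d) on the grounds that $T$ has finite rank leans on the misprinted hypothesis; under the corrected reading, (d) applies there because $C$ is compact and $X=L^1[0,1]$ has order continuous norm --- or you could simply invoke (b) again, exactly as in the other three cases. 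Either way the verification of $\|S\|=1$ and $TS=I$ for $p=1$ is correct and the conclusion stands.
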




\end{document}